\newcommand{\M}{\mathcal{M}}
\newcommand{\R}{\mathbb{R}}
\newcommand{\eps}{\varepsilon}
\newcommand{\A}{{\bf A}}
\newcommand{\B}{{\bf B}}
\renewcommand{\S}{{\bf S}}
\newcommand{\bcl}{\color{black}}
\newcommand{\ecl}{\color{black}}
\DeclareMathOperator{\F}{\bf{F}}
\DeclareMathOperator{\U}{\bf{U}}
\DeclareMathOperator{\V}{\bf{V}}
\DeclareMathOperator{\W}{\bf{W}}
\DeclareMathOperator{\Q}{\bf{Q}}
\DeclareMathOperator{\I}{\bf{I}}
\DeclareMathOperator{\K}{\bf{K}}
\DeclareMathOperator{\ten}{Ten}
\DeclareMathOperator{\mat}{\bf{Mat}}
\DeclareMathOperator{\bigtimes}{{\hbox{\large\sf X}}}
\title{Time integration of  tree tensor networks}
\author{Gianluca Ceruti\footnotemark[1] \and Christian Lubich\footnotemark[1] \and Hanna Walach\footnotemark[1]}
\date{}
\begin{document}

\maketitle

\renewcommand{\thefootnote}{\fnsymbol{footnote}}
\footnotetext[1]{Mathematisches Institut,
       Universit\"at T\"ubingen,
       Auf der Morgenstelle 10,
       D--72076 T\"ubingen,
       Germany. Email: {\tt \{ceruti,lubich,walach\}@na.uni-tuebingen.de}}

\begin{abstract}
Dynamical low-rank approximation by tree tensor networks is studied for the data-sparse approximation of large time-dependent data tensors and unknown solutions to tensor differential equations.  A time integration method for tree tensor networks of prescribed tree rank is presented and analyzed.  It extends the known projector-splitting integrators for dynamical low-rank approximation by matrices and rank-constrained Tucker tensors and is shown to inherit their favorable properties.  The integrator is based on recursively applying the low-rank Tucker tensor integrator. In every time step, the integrator climbs up and down the tree: it uses a recursion that passes from the root to the leaves of the tree for the construction of initial value problems on subtree tensor networks using appropriate restrictions and prolongations, and another recursion that passes from the leaves to the root for the update of the factors in the tree tensor network.
The integrator reproduces given time-dependent tree tensor networks of the specified tree rank exactly and is robust to the typical presence of small singular values in matricizations of the connection tensors, in contrast to standard integrators applied to the differential equations for the factors in the dynamical low-rank approximation by tree tensor networks. 
\end{abstract}

{
	\begin{keywords}
		Tree tensor network, tensor differential equation, dynamical low-rank approximation, time integrator 
	\end{keywords}
	\begin{AMS}
		15A69, 65L05, 65L20, 65L70
	\end{AMS}
	\pagestyle{myheadings}
	\thispagestyle{plain}
	\markboth{G.~CERUTI, CH.~LUBICH AND H.~WALACH}{TIME INTEGRATION OF TREE TENSOR NETWORKS}
}

\section{Introduction}

%
%
%

For the approximate solution of the initial value problem for a (huge) system of differential equations for the tensor $A(t)\in \R^{n_1\times\ldots\times n_d}$, 
\begin{equation} \label{eq:fullEq}
	\dot{A}(t) = F(t, A(t)), 
\end{equation}
we aim to construct $Y(t)\approx A(t)$ in an approximation manifold~$\mathcal{M}$ of much smaller dimension, which in the present work will be chosen as a manifold of tree tensor networks of fixed tree rank. This shall provide a data-sparse computational approach to high-dimensional problems that cannot be treated by direct time integration because of both excessive memory requirements and computational cost. 

A differential equation for $Y(t)\in \mathcal{M}$ is obtained by choosing the time derivative
$\dot Y(t)$ as that element in the tangent space $T_{Y(t)}\M$ for which
$$
\| \dot Y(t) - F(t,Y(t)) \| \quad\text{is minimal},
$$
where the norm is chosen as the Euclidean norm of the vector of the tensor entries. In the quantum physics and chemistry literature, this approach is known as the Dirac--Frenkel time-dependent variational principle, named after work by Dirac in 1930 who used the approach in the context of what is nowadays known as the time-dependent Hartree--Fock method for the multi-particle time-dependent Schr\"odinger equation; see, e.g., \cite{KrS81,Lu08}. Equivalently, this minimum-defect condition can be stated as a Galerkin condition on the state-dependent approximation space $T_{Y(t)}\M$,
$$
\dot Y(t) \in T_{Y(t)}\M \quad\text{such that}\quad \langle\dot Y(t) - F(t,Y(t)), Z \rangle=0 \quad\ \forall \, Z\in T_{Y(t)}\M.
$$
Using the orthogonal projection $P(Y)$ onto the tangent space $T_Y\M$, this can be reformulated as the (abstract) ordinary differential equation on $\M$,
\begin{equation} \label{eq:projEq}
	\dot{Y}(t) = P(Y(t)) F(t, Y(t)).    
\end{equation}
This equation needs to be solved numerically in an efficient and robust way. For fixed-rank matrix and tensor manifolds,
the orthogonal projection $P(Y)$ turns out to be an alternating sum of subprojections, which reflects the multilinear structure of the problem. The explicit form of the tangent space projection in the low-rank matrix case as an alternating sum of three subprojections was derived in \cite{KoL07} and was used  in \cite{LuO14} to derive a projector-splitting integrator for low-rank matrices, which efficiently updates an SVD-like low-rank factorization in every time step and which is robust to the typically arising small 
singular values that cause severe difficulties with standard integrators applied to the system of differential equations for the factors of the SVD-like decomposition of the low-rank matrices; see \cite{KiLW16}. The projector-splitting integrator was extended to tensor trains / matrix product states in \cite{LuOV15}; see also \cite{HaLOVV16} for a description of the algorithm in a physical idiom. 
The projector-splitting integrator was extended  to  Tucker tensors of fixed multilinear rank in \cite{Lu15}.
A reinterpretation was given in \cite{LuVW18}, in which the Tucker tensor integrator was rederived by recursively performing inexact substeps in the matrix projector-splitting integrator applied to matricizations of the tensor differential equation followed by retensorization. This interpretation made it possible to show that the Tucker integrator inherits the favorable robustness properties of the low-rank matrix projector-splitting integrator.

In the present paper we take up such a recursive approach to derive an integrator for general tree tensor networks, which is shown to be efficiently implementable (provided that the righthand side function $F$ can be efficiently evaluated on tree tensor networks in factorized form) and to inherit the robust convergence properties of the low-rank matrix, Tucker tensor and tensor train / matrix product state integrators shown previously in \cite{KiLW16,LuVW18}. The proposed integrator for tree tensor networks reduces to the well-proven projector-splitting integrators in the particular cases of Tucker tensors and tensor trains / matrix product states. We expect (but will not prove) that it can itself be interpreted as a projector-splitting integrator based on splitting the tangent space projection of the fixed-rank tree tensor network manifold.

\bcl
For a special tree, this integrator (given in an {\it ad hoc} formulation) was already used in \cite{LuE18} for the Vlasov--Poisson equation of plasma physics. In that case, the tree is given by the separation $((x_1,x_2,x_3),(v_1,v_2,v_3))$ of the position and velocity variables, which are further separated into their Cartesian coordinates. Very recently,  this tree tensor network integrator (or very similar versions) was  applied to  problems of current interest in quantum physics in \cite{BaA19} and \cite{KlLR20},
studying multi-orbital Anderson impurity models and the dynamics in two-dimensional quantum lattices, respectively.
None of these papers gives a systematic construction and numerical analysis of the integrator. This is the objective of the present paper.
\ecl

In Section \ref{sec:ttn} we introduce notation and the formulation of  tree tensor networks as  multilevel-structured Tucker tensors and give basic properties, emphasizing orthonormal factorizations. The tree tensor network (TTN) is constructed from the basis matrices at the leaves and the connection tensors at the inner vertices of the tree in a multilinear recursion that passes from the leaves to the root of the tree.

In Section~\ref{sec:eti} we recall the algorithm of the Tucker tensor integrator of \cite{LuVW18} and extend it to the case of several Tucker tensors with the same basis matrices. This extended Tucker integrator, which is nothing but the Tucker integrator for an extended Tucker tensor, is a basic building block of the integrator for tree tensor networks.

In Section~\ref{sec:ttni}, the main algorithmic section of this paper, we derive the recursive TTN integrator and discuss the basic algorithmic aspects. In every time step, the integrator uses a recursion that passes from the root to the leaves of the tree for the construction of initial value problems on subtree tensor networks using appropriate restrictions and prolongations, and another recursion that passes from the leaves to the root for the update of the factors in the tree tensor network. The integrator only solves low-dimensional matrix differential equations (of the dimension of the basis matrices at the leaves) and low-dimensional tensor differential equations (of the dimension of the connection tensors at the inner vertices of the tree), alternating with orthogonal matrix decompositions of such small matrices and of matricizations of the connection tensors.

In Section~\ref{sec:exact} we prove a remarkable exactness property: if $F(t,Y)=\dot A(t)$ for a given tree tensor network $A(t)$ of the specified tree rank, then the recursive TTN integrator for this tree rank reproduces $A(t)$ exactly. This exactness property is proved using the analogous exactness property of the Tucker tensor integrator proved in \cite{LuVW18}, which in turn was proved using the exactness property for the matrix projector-splitting integrator that was discovered and proved in \cite{LuO14}.

In Section~\ref{sec:err} we prove first-order error bounds that are independent of small singular values of matricizations of the connecting tensors. The proof relies on the similar error bound for the Tucker integrator \cite{LuVW18}, which in turn relies on such an error bound for the fixed-rank matrix projector-splitting integrator proved in \cite{KiLW16}, in a proof that uses in an essential way the exactness property.
The robustness to small singular values distinguishes the proposed integrator substantially from standard integrators applied to the differential equations for the basis matrices and connection tensors derived in \cite{WaT03}. We note that the proposed TTN integrator foregoes the formulation of these differential equations for the factors. The ill-conditioned density matrices whose inverses appear in these differential equations are never formed, let alone inverted, in the TTN integrator.

The present paper thus completes a path to extend the low-rank matrix projector-splitting integrator of \cite{LuO14}, together with its favorable properties, from the dynamical low-rank approximation by matrices of a prescribed rank to Tucker tensors of prescribed multilinear rank to general tree tensor networks of prescribed tree rank.

In Section~\ref{sec:num} we present a numerical experiment which shows the error behaviour of the proposed integrator in accordance with the theory. We choose the example of retraction of the sum of a tree tensor network and a tangent network, which is an operation needed in many optimization algorithms for tree tensor networks; cf.~\cite{AbO15} for the low-rank matrix  case. The corresponding example was already chosen in numerical experiments for the low-rank matrix, tensor train and Tucker tensor cases in \cite{LuO14,LuOV15,LuVW18}, respectively. It is beyond the scope of this paper to present the results of numerical experiments with the recursive TTN integrator in actual applications of tree tensor networks in physics, chemistry or other sciences. We note, however, that striking numerical experiments with this integrator 
\bcl
have already been reported 
for the Vlasov--Poisson equation of plasma physics in \cite{LuE18} and for problems in quantum physics in \cite{BaA19,KlLR20}.

While we describe the TTN integrator for real tensors, the algorithm and its properties extend in a straightforward way to complex tensors as arise in quantum physics. Only some additional care in using transposes $\U^\top$ versus adjoints $\U^*=\overline \U^\top$ is required for this extension.

Throughout the paper, we denote tensors by roman capitals and matrices by boldface capitals.
\ecl
\bigskip

\pagebreak[3]
\section{Preparation: Matrices, Tucker tensors, tree tensor networks, and their ranks}
\label{sec:ttn}

\subsection{Matrices of rank $r$}
The singular value decomposition (SVD) shows that a matrix $\A\in \R^{m\times n}$ is of rank $r$ if and only if it can be factorized as
$$
\A = \U \S \V^\top,
$$
where $\U\in \R^{m\times r}$ and $\V \in \R^{n\times r}$ have orthonormal columns, and $\S\in \R^{r\times r}$ has full rank $r$. 
\bcl The decomposition is not unique. (The SVD is a particular choice with diagonal $\S$.) \ecl
The real $m\times n$ matrices of rank (exactly) $r$ are known to form a smooth embedded manifold in $\R^{m\times n}$ \cite{HeM94}.

\subsection{Tucker tensors of multilinear rank $(r_i)$}
For a tensor $A\in \R^{n_1\times\dots \times n_d}$, the multilinear rank $(r_1,\dots,r_d)$ is defined as the $d$-tuple of the ranks $r_i$ of the matricizations $\mat_i(A)\in \R^{n_i\times n_{\neg i}}$ for $i=1,\dots,d$, where $n_{\neg i} =\prod_{j\ne i} n_j$. We recall that the $i$th matricization aligns in the $k$th row (for $k=1,\dots,n_i$) all entries of $A$ that have the index $k$ in the $i$th position, usually ordered co-lexicographically. The inverse operation is tensorization of the matrix, denoted by $\ten_i$:
$$
\mathbf{A}_{(i)} = \mat_i(A)\in \R^{n_i\times n_{\neg i}} \quad\text{ if and only if }\quad
A = \ten_i(\mathbf{A}_{(i)}) \in \R^{n_1\times\dots \times n_d}.
$$
It is known from \cite{DeLDV00} that the tensor $A$ has multilinear rank $(r_1,\dots,r_d)$ if and only if it can be factorized as a {\it Tucker tensor} (we adopt the shorthand notation from~\cite{KoB09})
\bcl
\begin{align}\label{tucker}
&A = C \times_1 \U_1 \times_2 \U_2 \dots \times_d \U_d = C  \bigtimes_{i=1}^d \U_i, \qquad \\
\text{i.e.,} &\quad a_{k_1,\dots,k_d} = \sum_{l_1=1}^{r_1}\dots \sum_{l_d=1}^{r_d}
c_{l_1,\dots,l_d} u_{k_1,l_1}\dots u_{k_d,l_d},
\nonumber
\end{align}
\ecl
where the \emph{basis matrices} $\U_i\in \R^{n_i\times r_i}$ have orthonormal columns and the \emph{core tensor} $C\in \R^{r_1\times\dots \times r_d}$ has full multilinear rank $(r_1,\dots,r_d)$. (This requires a compatibility condition among the ranks: $r_i \le \prod_{j\ne i} r_j$. In particular, this condition is satisfied if all ranks $r_i$ are equal.) \bcl As in the matrix case ($d=2$), this decomposition is not unique.\ecl

A useful formula for the matricization of Tucker tensors is
\begin{equation}\label{unfolding}
\mat_i \bigl( C  \bigtimes_{j=1}^d \U_j \bigr) = \U_i \mat_i(C) \biggl(\bigotimes_{j\ne i} \U_j^\top \biggr),
\end{equation}
where $\otimes$ denotes the Kronecker product of matrices.

The tensors of given dimensions $(n_1,\dots,n_d)$ and fixed multilinear rank $(r_1,\dots,r_d)$ are known to form a smooth embedded manifold in $\R^{n_1\times\dots \times n_d}$.

\subsection{Orthonormal tree tensor networks of tree rank $(r_\tau)$} \label{subsec:ttn}
A tree tensor network is a multilevel-structured Tucker tensor where the  configuration is described by a  tree. The notion of a `tree tensor network' was coined in the quantum physics literature \cite{ShDV06}, but tree tensor networks were actually already used a few years earlier in the multilayer MCTDH method of chemical physics \cite{WaT03}. In the mathematical literature, tree tensor networks with binary trees have been studied as `hierarchical tensors' \cite{Ha12} and with general trees as `tensors in tree-based tensor format' \cite{FaHN15,FaHN18}. We remark that Tucker tensors and  matrix product states / tensor trains \cite{PeVWC07,Os11} are particular instances of tree tensor networks, whose trees are trees of minimal height (bushes) and  binary trees of maximal height, respectively. 

\bcl
In an informal way, one arrives at a tree tensor network by first considering a tensor in Tucker format, in which then the basis matrices are tensorized and approximated by tensors in a low-rank Tucker format. Their basis matrices are again tensorized and approximated by tensors in low-rank Tucker format, and so on over multiple levels. A different viewpoint common in quantum physics is to describe a tree tensor network as resulting from  a collection of tensors that have two types of indices:
those corresponding to physical degrees of freedom and further auxiliary indices that always appear on two tensors. The graph with the tensors as vertices and the indices as edges is assumed to be a tree, i.e., to have no loops. The tree tensor network is then obtained by contracting over the auxiliary indices.
\ecl

As there does not appear to exist a firmly established mathematical notation for tree tensor networks, we give a formulation from scratch \bcl that turns out useful for the formulation, implementation and analysis of the numerical methods.\ecl

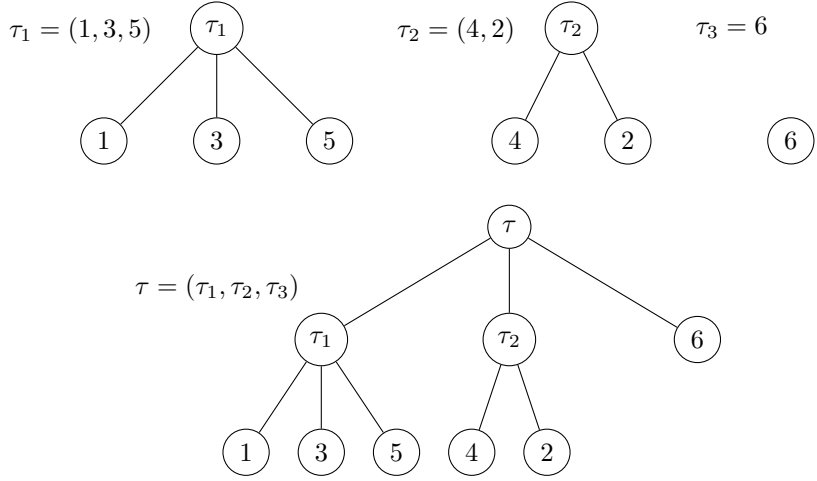
\begin{figure} [t]
	\label{fig:tree} 
	\begin{center}
	\begin{tikzpicture}
		\node[circle,draw]  { $\tau_1 $}
		child { node[circle,draw] {1} }
		child { node[circle,draw] {3} }
		child { node[circle,draw] {5} };
		
		\node[align=center,font=\bfseries, yshift=2em] (title) 
		at (current bounding box.west)
		{$ \tau_1 = (1,3,5) $};
	\end{tikzpicture}	
	\quad
	\begin{tikzpicture}
		\node[circle,draw] { $\tau_2$}
		child { node[circle,draw] {4} }
		child { node[circle,draw] {2} };
		
		\node[align=center,font=\bfseries, yshift=2em] (title) 
		at (current bounding box.west)
		{$ \tau_2 = (4,2) \qquad $ };
	\end{tikzpicture}
	\quad
	\begin{tikzpicture}
		\node[circle,draw] { $6$};
		 
		\node[align=center,font=\bfseries, yshift=4.3em] (title) 
		at (current bounding box.west)
		{$ \tau_3 = 6 \qquad $ };
	\end{tikzpicture}
	
	\vspace{0.5cm}
	
	\begin{tikzpicture}[every node/.style={},level 1/.style={sibling distance=25mm},level 2/.style={sibling distance=10mm}]
		\node[circle,draw] { $\tau$ }
		child { node[circle,draw] { $\tau_1$ }
			child { node[circle,draw] {1} }
			child { node[circle,draw] {3} }
			child { node[circle,draw] {5} } }
		child { node[circle,draw] { $\tau_2$ }
			child { node[circle,draw] {4} }
			child { node[circle,draw] {2} } }
		child{ node[circle,draw] {6}};
		
		\node[align=center,font=\bfseries, yshift=2em] (title) 
		at (current bounding box.west)
		{$ \tau = (\tau_1, \tau_2, \tau_3) $ };
	\end{tikzpicture}
	
	  \caption{Graphical representation of a tree and three subtrees with the set of leaves $\mathcal{L}=\{1,2,3,4,5,6\}$.}
	\end{center}	
	
\end{figure} 

\begin{definition}[Ordered trees with unequal leaves]
	\bcl
	Let $\mathcal{L}$ be a given finite set, the elements of which are referred to as leaves.
	We  define the set $\mathcal{T}$ of trees  $\tau$ with the corresponding set of leaves $L(\tau)\subseteq \mathcal{L}$ recursively as follows:
	\ecl
	\begin{enumerate}[(i)]
		\item
		\bcl {\em Leaves are trees:} \ecl
			$ \mathcal{L} \subset \mathcal{T}$,\ \text{ and }\  $L(\ell) := \{\ell\}$ for each $\ell \in \mathcal{L}$.
		\item \bcl {\em Ordered $m$-tuples of trees with different leaves are again trees:} \ecl
			If, for some $m\ge 2$,
			$$
			\tau_1, \dots, \tau_m \in \mathcal{T} 
			 \quad \text{ with }\quad
			L(\tau_i ) \cap L(\tau_j ) = \emptyset \quad \forall i \neq j,
			$$
			\bcl then their ordered $m$-tuple is in $\mathcal{T}$:
			\ecl
			$$ \tau := (\tau_1, \dots, \tau_m) \in \mathcal{T} 
					, \quad \text{ and }\quad 
					L(\tau) := \dot{\bigcup}_{i=1}^m L(\tau_i) \ . 
			$$
	\end{enumerate}		
\end{definition}
The graphical interpretation is that (i) leaves are trees and (ii) every other tree $\tau \in\mathcal{T}$ is formed by connecting a root to several trees with different leaves.  (Note that $m=1$ is excluded: the 1-tuple $\tau=(\tau_1)$ is considered to be identical to $\tau_1$.)
$L(\tau)$ is the set of leaves of the tree $\tau$. 


			The trees $\tau_1,\dots,\tau_m$ are called direct subtrees of the tree $\tau= (\tau_1, \dots, \tau_m)$, which together with direct subtrees of direct subtrees of $\tau$ etc. are called the {\it subtrees} of $\tau$. We let $T(\tau)$ be the set of subtrees of a tree $\tau\in\mathcal{T}$, including~$\tau$. More formally, we set 
$$
\text{
$T(\ell):= \{\ell\}$ for $\ell\in\mathcal{L}$, \ and $\ T(\tau):=\{ \tau \} \, \dot \cup\; \dot{\bigcup}_{i=1}^m T(\tau_i)$ for $\tau = (\tau_1, \dots, \tau_m)$.
}
$$
In the graphical interpretation, the subtrees are in a bijective correspondence with the vertices of the tree, by assigning  to each subtree its root; see Figure~2.1.

On the set of trees $\mathcal{T}$ we define a partial ordering by writing, for $\sigma,\tau\in\mathcal{T}$,
\begin{align*}
&\sigma \le \tau \quad \text{if and only if} \quad \sigma \in T(\tau), 
\\
&\sigma < \tau \quad \text{if and only if} \quad \sigma \le \tau \ \text{ and } \ \sigma \ne \tau.
\end{align*}

\bcl
We now fix a maximal tree $\bar\tau\in\mathcal{T}$ (with $L(\bar \tau)=\mathcal{L}$). On this tree 
\ecl
we work with the following \bcl quantities\ecl:
\begin{enumerate}
\item To each leaf $\ell\in\mathcal{L}$ we associate a dimension $n_\ell$, a rank $ r_\ell \leq n_\ell$ and a {\it basis matrix} $\U_\ell \in \mathbb{R}^{n_\ell  \times r_\ell}$ of full rank $r_\ell$.
\item To every subtree $\tau = ( \tau_1, \dots, \tau_m)\le \bar\tau$ we associate a rank $r_\tau$ and a {\it connection tensor} $ C_\tau \in \mathbb{R}^{r_\tau \times r_{\tau_1} \times \dots \times r_{\tau_m}}$ of full multilinear rank $(r_\tau, r_{\tau_1},\ldots, r_{\tau_m})$. We set $r_{\bar\tau}=1$.
\end{enumerate}
This can be interpreted as associating a tensor to the {\it root} of each subtree. In this way every vertex of the given tree carries either a matrix --- if it is a leaf --- or else a tensor whose order equals the number of edges leaving the vertex. \bcl In applications, the basis matrices correspond to the physical degrees of freedom and the connection tensors determine the correlation network.\ecl

With these data, a tree tensor network (TTN) is constructed in a recurrence relation that passes from the leaves to the root of the tree.

\pagebreak[3]
\begin{definition}[Tree tensor network]
For a given tree $\bar \tau \in \mathcal{T}$ \bcl and basis matrices $\U_\ell$ and connection tensors $C_\tau$ as described in 1. and 2. above, we recursively define a tensor $X_{\bar \tau}$ with a tree tensor network representation \ecl as follows: 
	\begin{enumerate}[(i)]
		\item
			\bcl For each leaf \ecl $\,\tau = \ell \in \mathcal{L}$, we set 
			$$X_\ell := \U_\ell^\top \in \R^{r_\ell \times n_\ell} \ .  $$
		\item
			\bcl If, for some $m\ge 2$, the tree $\tau = ( \tau_1, \dots, \tau_m)$ is a subtree of $\bar\tau $, \ecl then
			we set $n_\tau = \prod_{i=1}^m  n_{\tau_i}$ and $\I_\tau$ the identity matrix of dimension $r_\tau$, and
			\begin{align*}
				& X_\tau := C_\tau \times_0 \I_{\tau} \bigtimes_{i=1}^m \U_{\tau_i} 
				\in \mathbb{R}^{r_\tau \times n_{\tau_1} \times \dots \times n_{\tau_m}},
				\\
				& \U_{\tau} := \mat_0( X_\tau )^\top \in \mathbb{R}^{n_\tau \times r_\tau} \ .
			\end{align*}
	\end{enumerate}
\end{definition}
	
\bcl	For short, we refer to the tensor $X_{\bar\tau}$ as a {\em tree tensor network} on the tree $\bar\tau$. \ecl
	
The expression on the righthand side of the definition of $X_\tau$ in (ii) can be viewed as an $r_\tau$-tuple of $m$-tensors with the same basis matrices $\U_{\tau_i}$ but different core tensors $C_\tau(k,:)\in \mathbb{R}^{n_{\tau_1} \times \dots \times n_{\tau_m}}$ for $k=1,\dots,r_\tau$. The vectorizations of these $r_\tau$ $m$-tensors, which are of dimension $n_\tau$, form the columns of the matrix $\U_{\tau}$. The index $0$ in $\times_0$ and $\mat_0$ refers to the mode  of dimension $r_\tau$ of $X_\tau\in \mathbb{R}^{r_\tau \times n_{\tau_1} \times \dots \times n_{\tau_m}}$, which we count as mode~$0$. 
\bcl
The product $\times_0 \I_{\tau}$ is redundant in the definition of $X_\tau$, but we include it to emphasize the fact that $X_\tau$ is a tensor of order $m+1$. In the graphical representation, the edge $0$ is directed upward to the parent vertex and the edges $1,\ldots,m$ are directed downward to the subtrees; see Figure~2.1.

\bcl
This construction gives a data-sparse representation of a tensor with $\prod_{\ell\in\mathcal{L}} n_\ell$ entries.
For a rough bound of the memory requirements, let $d$ be the number of leaves and note that the number of vertices of the tree that are not leaves, is less than~$d$.
We set $n=\max_\ell n_\ell$  and $r=\max_\tau r_\tau$ and let $m+1$ be the maximal order of the connection tensors $C_\tau$. The basis matrices and connection tensors then have less than
$$
dnr + d r^{m+1} \ll n^d \quad \text{entries.}
$$

We note that the representation of $X_{\bar\tau}$ in terms of  the basis matrices $\U_\ell$ and connection tensors $C_\tau$ of full multilinear rank is not unique; cf.~\cite{UschV13}.
\ecl
It is favorable to work with orthonormal matrices,  so that each tensor $X_\tau$ for $\tau\le \bar\tau$ is in the Tucker format.

\begin{definition}[Orthonormal tree tensor network]
A tree tensor network $X_{\bar\tau}$ (more precisely, its representation in terms of the matrices $\U_\tau$) is called \emph{orthonormal}  if for each subtree $\tau<\bar\tau$, the matrix  $\U_\tau$ has orthonormal columns.
\end{definition}

The following is a key lemma.

\begin{lemma} \label{lem:ttn-orth}
	For a tree $\tau = (\tau_1, \dots, \tau_m) \in \mathcal{T}$,	let the matrices $\ \U_{\tau_1}, \dots, \U_{\tau_m}$ have orthonormal columns. Then, the matrix  $\U_\tau$ has orthonormal columns if and only if the matricization $\ \mat_0(C_\tau)^\top \in \mathbb{R}^{ r_{\tau_1} \dots r_{\tau_m}  \times r_\tau }$ has orthonormal columns. 
\end{lemma}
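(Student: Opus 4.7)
The plan is to express $\U_\tau$ explicitly in terms of $\mat_0(C_\tau)$ and the $\U_{\tau_i}$, and then compute $\U_\tau^\top\U_\tau$ using the standard multiplicative property of Kronecker products.

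First, I would apply the matricization formula \eqref{unfolding} in mode $0$ to the Tucker-type representation
$$
X_\tau \;=\; C_\tau \times_0 \I_\tau \bigtimes_{i=1}^m \U_{\tau_i}.
$$
Since $\I_\tau$ is the identity on the mode-$0$ slot, this yields
$$
\mat_0(X_\tau) \;=\; \mat_0(C_\tau)\,\Bigl(\bigotimes_{i=1}^m \U_{\tau_i}^\top\Bigr),
$$
and therefore, by definition of $\U_\tau$,
$$
\U_\tau \;=\; \mat_0(X_\tau)^\top \;=\; \Bigl(\bigotimes_{i=1}^m \U_{\tau_i}\Bigr)\mat_0(C_\tau)^\top.
$$

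Next, I would compute $\U_\tau^\top \U_\tau$. Using the mixed-product property $(A\otimes B)(C\otimes D)=(AC)\otimes(BD)$ and the hypothesis that each $\U_{\tau_i}$ has orthonormal columns, i.e.\ $\U_{\tau_i}^\top\U_{\tau_i}=\I_{r_{\tau_i}}$, one gets
$$
\Bigl(\bigotimes_{i=1}^m \U_{\tau_i}^\top\Bigr)\Bigl(\bigotimes_{i=1}^m \U_{\tau_i}\Bigr) \;=\; \bigotimes_{i=1}^m \bigl(\U_{\tau_i}^\top\U_{\tau_i}\bigr) \;=\; \I_{r_{\tau_1}\cdots r_{\tau_m}},
$$
so that
$$
\U_\tau^\top \U_\tau \;=\; \mat_0(C_\tau)\,\mat_0(C_\tau)^\top.
$$

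Finally, I would conclude: $\U_\tau$ has orthonormal columns precisely when $\U_\tau^\top\U_\tau=\I_{r_\tau}$, which by the identity above is equivalent to $\mat_0(C_\tau)\mat_0(C_\tau)^\top=\I_{r_\tau}$, i.e.\ to $\mat_0(C_\tau)^\top$ having orthonormal columns. There is no real obstacle here; the only delicate point is to be consistent with the convention for $\mat_0$ (with mode $0$ indexing the rows) so that \eqref{unfolding} applies verbatim with $i=0$ and the $\I_\tau$ factor disappears.
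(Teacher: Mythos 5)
Your proposal is correct and follows essentially the same route as the paper's own proof: apply the unfolding formula \eqref{unfolding} in mode $0$ to get $\U_\tau^\top=\mat_0(C_\tau)\bigl(\bigotimes_{i=1}^m \U_{\tau_i}^\top\bigr)$, then use the mixed-product property of Kronecker products together with $\U_{\tau_i}^\top\U_{\tau_i}=\I$ to reduce $\U_\tau^\top\U_\tau$ to $\mat_0(C_\tau)\,\mat_0(C_\tau)^\top$. No gaps; the remark about the $\I_\tau$ factor being absorbed is exactly the point the paper also makes.
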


\begin{proof} We have, by the definition of $\U_\tau$ and $X_\tau$ and the unfolding formula \eqref{unfolding},
$$
\U_\tau^\top = \mat_0 (X_\tau) = \I_\tau \mat_0(C_\tau) \bigotimes_{i=1}^m \U_{\tau_i} ^\top = \mat_0(C_\tau) \bigotimes_{i=1}^m \U_{\tau_i} ^\top.
$$
It follows that
$$
\U_\tau^\top \U_\tau = \mat_0(C_\tau) \Big( \bigotimes_{i=1}^m \U_{\tau_i} ^\top \U_{\tau_i} \Big) \mat_0(C_\tau)^\top=
\bigl( \mat_0(C_\tau)^\top\bigr)^\top \mat_0(C_\tau)^\top,
$$
which proves the result.
\end{proof} 

We observe that due to the recursive definition of a tree tensor network it thus suffices to require that for each leaf the matrix $\U_\ell$ and for every other subtree $\tau<\bar\tau$ the matrix $\mat_0(C_\tau)^\top$ have orthonormal columns.  
{\em Unless stated otherwise, we intend the tree tensor networks to be orthonormal in this paper.}

The orthonormality condition of Lemma~\ref{lem:ttn-orth} is very useful, because it reduces the orthonormality condition for the large, recursively constructed and computationally inaccessible matrix $\U_\tau\in  \mathbb{R}^{n_{\tau_1} \dots n_{\tau_m} \times r_\tau}$ to the orthonormality condition for the smaller, given matrix $ \mat_0(C_\tau)^\top \in \mathbb{R}^{ r_{\tau_1} \dots r_{\tau_m}  \times r_\tau }$. To our knowledge, the first use of this important property was made in the chemical physics literature in the context of the multilayer MCTDH method \cite{WaT03}. 

A further consequence of Lemma~\ref{lem:ttn-orth} is that {\em every (non-orthonormal) tree tensor network has an orthonormal representation}. This is shown using a QR decomposition of non-ortho\-normal matrices $\mat_0(C_{\tau_i})^\top=\Q_{\tau_i}\mathbf{R}_{\tau_i}$ and including the  factor $\mathbf{R}_{\tau_i}$ in the tensor $C_\tau$ of the parent tree $\tau=(\tau_1,\dots,\tau_m)$:
\bcl $C_{\tau_i}$ is changed to $\ten_0(\Q_{\tau_i}^\top)$ and $C_\tau$ is changed to $C_\tau \bigtimes_{i=1}^m \mathbf{R}_{\tau_i}$. This is done recursively from the leaves to the root.
We note that the property of $C_\tau$ to be of full multilinear rank $(r_\tau, r_{\tau_1},\ldots, r_{\tau_m})$
does not change under these transformations.\ecl

We rely on the following property, which can be proved as in \cite{UschV13}, where binary trees are considered (this corresponds to the case $m=2$ above); see also \cite{FaHN15}.

\begin{lemma} \label{lem:mf}
\bcl
Let a tree $\bar \tau\in\mathcal{T}$ be given together with dimensions $(n_\ell)_{\ell\in L(\bar\tau)}$ and ranks $(r_\tau)_{\tau\in T(\bar\tau)}$. The
set $\mathcal{M}_{\bar\tau}=\mathcal{M}(\bar\tau, (n_\ell)_{\ell\in L(\bar\tau)}, (r_\tau)_{\tau\in T(\bar\tau)})$ of 
 tensors with an orthonormal tree tensor network representation of the given dimensions and ranks
 is a smooth embedded manifold in the tensor space  $\R ^{\times_{\ell\in L(\bar\tau)} n_\ell}$.
 \ecl
\end{lemma}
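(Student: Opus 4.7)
My plan is to proceed by induction on the structure of $\bar\tau$, using the Tucker-manifold result from Subsection~2.2 as the base case and the recursive construction of Definition~2.2 for the inductive step. In the base case $\bar\tau = (\ell_1,\dots,\ell_m)$ has only leaves as direct subtrees, so the tree tensor network $X_{\bar\tau}$ is a plain Tucker tensor with basis matrices $\U_{\ell_1},\dots,\U_{\ell_m}$ and core $C_{\bar\tau}$; hence $\mathcal{M}_{\bar\tau}$ coincides with the Tucker manifold of multilinear rank $(r_{\ell_1},\dots,r_{\ell_m})$, already known to be a smooth embedded submanifold.

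For the inductive step I would take $\bar\tau = (\tau_1,\dots,\tau_m)$ with at least one non-leaf $\tau_i$ and consider the smooth parameterization
$$
\Phi \colon \mathcal{M}_{\tau_1} \times \cdots \times \mathcal{M}_{\tau_m} \times \mathcal{G}_{\bar\tau} \;\longrightarrow\; \R^{\times_{\ell\in L(\bar\tau)} n_\ell},
$$
where the induction hypothesis provides the smooth manifold structure on each $\mathcal{M}_{\tau_i}$, and $\mathcal{G}_{\bar\tau}$ denotes the (open) set of tensors $C_{\bar\tau}$ of the prescribed full multilinear rank $(1,r_{\tau_1},\dots,r_{\tau_m})$. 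The map $\Phi$ reads off the top matrices $\U_{\tau_i}$ from each orthonormal subtree TTN and assembles $X_{\bar\tau} = C_{\bar\tau} \times_0 \I_{\bar\tau} \bigtimes_{i=1}^m \U_{\tau_i}$ exactly as in Definition~2.2; its image is precisely $\mathcal{M}_{\bar\tau}$. To upgrade this parameterization to manifold charts I would exploit the non-uniqueness of the representation: at each internal subtree $\tau<\bar\tau$, the substitution $\U_\tau \mapsto \U_\tau \mathbf{Q}_\tau$ with $\mathbf{Q}_\tau \in \mathrm{O}(r_\tau)$ can be absorbed into the parent connection tensor by contracting with $\mathbf{Q}_\tau^\top$ in the corresponding mode, leaving $X_{\bar\tau}$ unchanged. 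This yields a smooth, free right action of the compact Lie group $G = \prod_{\tau<\bar\tau}\mathrm{O}(r_\tau)$ on the domain of $\Phi$ whose orbits coincide with the fibers of $\Phi$; a standard principal-bundle argument then endows $\mathcal{M}_{\bar\tau}$ with the structure of a smooth quotient manifold.

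The hard part will be the \emph{embedded} (as opposed to merely immersed) submanifold property in the ambient tensor space. I would handle it via the hierarchical-rank characterization: $X\in\mathcal{M}_{\bar\tau}$ if and only if for every subtree $\tau\le\bar\tau$ the matricization of $X$ grouping the modes in $L(\tau)$ against the remaining modes has rank exactly $r_\tau$. Each individual fixed-rank matrix locus is a smooth embedded submanifold, and the compatibility of these finitely many rank conditions along the tree is precisely what makes their intersection embedded; this compatibility is worked out in \cite{UschV13} for binary trees and in \cite{FaHN15} for general trees. The extension from binary to general $m$-ary trees is essentially notational once the recursive parameterization $\Phi$ above is in place, so the lemma follows.
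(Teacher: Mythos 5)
The paper does not actually prove this lemma: it only remarks that the result ``can be proved as in \cite{UschV13}'', where binary trees are treated, ``see also \cite{FaHN15}'' for general trees. Your proposal reconstructs the outline of precisely that argument --- induction on the tree, a recursive parameterization over the subtree manifolds and the connection tensor, a free action of $\prod_{\tau<\bar\tau}\mathrm{O}(r_\tau)$ accounting for the non-uniqueness of the representation, and the hierarchical-rank characterization --- and, like the paper, ultimately defers the decisive embeddedness step to the same two references. In substance you are therefore doing what the paper does, and somewhat more.

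Two caveats on the parts you argue yourself. First, the sentence claiming that the compatibility of the rank conditions ``is precisely what makes their intersection embedded'' is not an argument: an intersection of embedded submanifolds need not be embedded, and the fixed-rank loci $\{X:\operatorname{rank}\,\mat_{L(\tau)}(X)=r_\tau\}$ do not intersect transversally (already for Tucker tensors the sum of the codimensions of the individual rank loci far exceeds the codimension of the manifold). The proofs in \cite{UschV13,FaHN15} do not intersect rank varieties; they build local parameterizations (or local defining submersions) directly, so if you lean on those references you must lean on them for this step in its entirety rather than for a ``compatibility'' check. Second, the assertion that the fibers of your parameterization $\Phi$ coincide with the orbits of the orthogonal-group action is itself a nontrivial essential-uniqueness statement; it holds only because of the full-rank hypotheses on the basis matrices and on the connection tensors (this is where Lemma~\ref{lem:ttn-orth} and the full multilinear rank of each $C_\tau$ enter), and it deserves at least a sentence of justification. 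Neither caveat invalidates the plan, but as written the hard part is still outsourced --- exactly as in the paper itself.
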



\section{Extended Tucker Integrator}
\label{sec:eti}

In Subsection~\ref{subsec:nti} we recapitulate the algorithm of the Tucker tensor integrator of \cite{Lu15,LuVW18}, and in Subsection~\ref{subsec:eti} we extend the algorithm to $r$-tuples of Tucker tensors with the same basis matrices. This will be a  basic building block for the tree tensor network integrator derived in the next section.

\subsection{Tucker tensor integrator} \label{subsec:nti}
Let $\mathcal{M}_\textbf{r}$ be the manifold of tensors of order $d$ with fixed multi-linear rank \textbf{r}=$(r_1,\dots,r_d)$.
Let us consider an approximation $Y^0 \in  \mathcal{M}_\textbf{r}$ to the initial data~$A^0$,
\begin{equation*}
	 Y^0 = C^0 \bigtimes_{i=1}^d \U_i^0 \in \R^{ n_1 \times \dots \times n_d},
\end{equation*}
\bcl 
where the basis matrices $\U_i^0\in\R^{n_i\times r_i}$ have orthonormal columns and the core tensor $C^0\in \R^{r_1\times\dots\times r_d}$ has full multilinear rank \textbf{r}=$(r_1,\dots,r_d)$.
\ecl
The Tucker integrator is a numerical procedure that gives, after $(d+1)$ substeps, an approximation in Tucker tensor format, $Y^1\in\M_\textbf{r}$,  to the full solution $A(t_1)$ after a time step $t_1=t_0+h$. The procedure is repeated over further time steps to yield approximations $Y^n\in\M_\textbf{r}$ to $A(t_n)$.
At each substep of the algorithm, only one factor of the Tucker representation is updated while the others, with the exception of the core tensor, remain fixed. The essential structure of the algorithm 
\bcl
can be summarized as follows: Starting from $Y^0 = C^0 \bigtimes_{i=1}^d \U_i^0$ in factorized form, initialize $C_0^0 = C^0$.
\begin{align*}
	&\text{For $i=1, \ldots, d$, update $\U_i^0 \rightarrow \U_i^1$ and modify $C_{i-1}^0 \rightarrow C_i^0$.}
		\\
	&\text{Update } C_d^0 \rightarrow C^1.
\end{align*}
This yields the result after one time-step, $Y^1 = C^1 \bigtimes_{i=1}^d \U_i^1$ in factorized form.
\ecl

%
%
%
To simplify
the description, we introduce subflows $\Phi^{(i)}$  and $\Psi$, corresponding to the update of the basis matrices and of the core tensor, respectively. We set $r_{\neg i}= \prod_{j\ne i} r_i$ \bcl and use the notation $\S_i^{0,\top}=\bigl(\S_i^{0}\bigr)^\top$, $\textbf{V}_i^{0, \top}= \bigl(\textbf{V}_i^{0}\bigr)^\top$ etc.\ecl

\bigskip
\begin{algorithm}[H]
\label{alg:Phi-i}
	\caption{Subflow $\Phi^{(i)}$ }
	
	\SetAlgoLined
	\KwData{ 
		$Y^0 = C^0 \bigtimes_{j=1}^d \U_j^0 \text{ in factorized form}, F(t,Y), t_0, t_1$ 	
	}
	\KwResult{$Y^1 = C^1 \bigtimes_{j=1}^d \U_j^1$ in factorized form}
	\Begin{
		set $ \U_j^1 = \U_j^0 \quad \forall j \neq i$
		
		compute the QR decomposition $ \mat_i( C^0)^\top = \Q_i^0 \S_i^{0,\top} \in \R^{r_{\neg i}\times r_i}$ 	
			
		
		set $ \K_i^0 = \U_i^0 \S_i^0 \in \R^{n_i\times r_i}$ 
		
		
		solve the $n_i\times r_i$ matrix differential equation\\
		\qquad $ \dot{\K}_i (t) = \F_i(t, {\K}_i(t)) $ \ 
		with initial value $\K_i(t_0) = \K_i^0$ \\
		$\qquad$and return $ \K_i^1 = \K_i(t_1)$; here\\
		$\qquad\qquad \F_i(t,\K_i) = \mat_i ( F(t, \text{Ten}_i( \K_i(t) \textbf{V}_i^{0, \top})
		) \textbf{V}_i^0$ with\\
		$\qquad\qquad\textbf{V}_i^{0,\top} = \mat_i( \text{Ten}_i( 
		\Q_i^{0,\top}) \bigtimes_{j\ne i} \U_j^{0}
		)
		$
		
		compute the QR decomposition $ \K_i^1 = \U_i^1 \widehat{\S}_i^1$ 

                 solve the $r_i\times r_i$ matrix differential equation\\
		\qquad $ \dot{\S}_i (t) = - \widehat\F_i(t, {\S}_i(t)) $ \ 
		with initial value $\S_i(t_0) = \widehat{\S}_i^1$ \\
		$\qquad$and return $ \widetilde{\S}_i^0 = \S_i( t_1) $; here\\
		 $\qquad\qquad\widehat\F_i(t, {\S}_i) = \U_i^{1,\top}\F_i(t,\U_i^1{\S}_i)$

		
%
		
		set $ C^1 = \text{Ten}_i ( \widetilde{\S}_i^0 \Q_i^{0,\top}  )$ 
		
	}
\end{algorithm}

\bigskip
\bcl
The following remarks will be used in the next subsection:

$\bullet$ Instead of the QR decomposition, any orthogonal decomposition (e.g., SVD) can be used that yields $\Q_i^0$ and $\U_i^1$ with orthonormal columns. This yields the same tensor $Y^1$, albeit in a different factorization.

$\bullet$ In the case where $F(t,Y)$ does not depend  on either $t$ or  $Y$, we obtain the same result $\widetilde{\S}_i^0$ if we replace the differential equation  for $\S$  with the negative sign from $t_0\to t_1$, with initial value $\S_i(t_0) = \widehat{\S}_i^1$, by the differential equation with the positive sign for $\S$ backward in time from $t_1\to t_0$ with final value $\S_i(t_1) = \widehat{\S}_i^1$:

\medskip
  solve  $\dot{\S}_i (t) =  \widehat\F_i(t, {\S}_i(t)) $ \ 
		with final value $\S_i(t_1) = \widehat{\S}_i^1$
		and return $ \widetilde{\S}_i^0 = \S_i( t_0) $.
		
\medskip\noindent
For a general  non-autonomous function  $F(t,Y)$, the forward and  backward formulations are no longer equivalent, but the robust convergence result of \cite{LuVW18} holds equally true for both formulations.

\medskip
We further remark that the differential equations for $\K_i(t)$ and $\S_i(t)$ need to be solved numerically by a standard integrator, unless $F(t,Y)=\dot A(t)$ is independent of~$Y$.
\ecl

\medskip
The remaining subflow $\Psi$ describes the final update process of the core. 
\\

\begin{algorithm}[H]
	\caption{Subflow $\Psi$ }
	
	\SetAlgoLined
	\KwData{ 
		$Y^0 = C^0 \bigtimes_{j=1}^d \U_j^0 \text{ in factorized form}, F(t,Y), t_0, t_1$ 	
	}
	\KwResult{$Y^1 = C^1 \bigtimes_{j=1}^d \U_j^1$  in factorized form}
	\Begin{
		set $ \U_j^1 = \U_j^0 \quad \forall j=1, \dots , d$.
		
		
		solve the $r_1\times\dots\times r_d$ tensor differential equation\\
		$\qquad \dot C(t) = \widetilde F(t, C(t))$ \ with initial value $C(t_0)=C^0$\\
		$\qquad$and return $C^1=C(t_1)$; here\\
		$\qquad\qquad \widetilde F(t, C) = F(t, C \bigtimes_{j=1}^d \U_j^1) \bigtimes_{j=1}^d \U_j^{1,\top}$

%
%
%
%
%
	}
\end{algorithm}

\bigskip
Finally, the result of the Tucker tensor integrator after one time step can be expressed in a compact way  as
\begin{equation} \label{NestedTuckerInt}
	Y^1 = \Psi \circ \Phi^{(d)} \circ \dots \circ \Phi^{(1)}(Y^0) \ .
\end{equation}

We refer the reader to \cite{Lu15,LuVW18} for a detailed derivation and major properties of this Tucker tensor integrator.

\medskip
The efficiency of the implementation of this algorithm depends on the possibility to evaluate the functions $\F_i$ without explicitly forming the large slim matrix $ \textbf{V}_i^0 \in \R^{n_{\neg i}\times r_i}$ and the tensor $\text{Ten}_i( \K_i(t) \textbf{V}_i^{0, \top})\in \R^{n_1\times\dots\times n_d}$. This is the case if 
$F(t, C \bigtimes_{j=1}^d \U_j)$ is a linear combination of Tucker tensors of moderate rank whose factors can be computed directly from the factors $C$ and $\U_j$ without actually computing the entries of the Tucker tensor.

\subsection{Extended Tucker Integrator} \label{subsec:eti}
We consider the case of a 
 Tucker tensor of order $1+d$ where the first basis matrix in the decomposition of the initial data is the identity matrix of dimension $r \times r$,
$$ Y^0 =  C^0 \times_0 \I_r \bigtimes_{i=1}^d \U_i^0 \in \R^{r \times n_1 \times \dots \times n_d},$$
as appears in the recursive construction of orthonormal tree tensor networks.
This can be viewed as a collection of $r$ Tucker tensors in $\R^{ n_1 \times \dots \times n_d}$ with the same basis matrices $\U_i^0$. Recalling (\ref{NestedTuckerInt}), the action of the Tucker integrator after one time step can be represented as 
\begin{equation*}
	Y^1 = \Psi \circ \Phi^{(d)} \circ \dots \circ \Phi^{(1)} \circ \Phi^{(0)}(Y^0) \ .
\end{equation*} 	 
The following result \bcl simplifies the computation.\ecl

\begin{lemma}
	\bcl With an appropriate choice of orthogonalization, the action of the subflow $\Phi^{(0)}$ on $Y^0$ becomes trivial
	\ecl
	, i.e.,
	$$ \Phi^{(0)}(Y^0) = Y^0 .$$
\end{lemma}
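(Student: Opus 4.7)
The plan is to exploit two pieces of freedom inside $\Phi^{(0)}$: the choice of the orthogonal decomposition $\mat_0(C^0)^\top = \Q_0^0 \S_0^{0,\top}$ at the beginning, and the choice of the orthogonal decomposition $\K_0^1 = \U_0^1 \widehat{\S}_0^1$ after the K-step. Since $Y^0$ is an orthonormal tree tensor network, Lemma~\ref{lem:ttn-orth} guarantees that $\mat_0(C^0)^\top \in \R^{r_{\neg 0}\times r}$ has orthonormal columns. I would therefore use the ``trivial'' orthogonal decomposition $\Q_0^0 := \mat_0(C^0)^\top$, $\S_0^0 := \I_r$; combined with $\U_0^0 = \I_r$, this produces the initial value $\K_0^0 = \U_0^0 \S_0^0 = \I_r$ for the K-equation.

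The K-equation $\dot \K_0 = \F_0(t,\K_0)$ is then integrated from $\K_0(t_0) = \I_r$ to obtain some $\K_0^1 = \K_0(t_1)$. For the subsequent orthogonal decomposition of $\K_0^1$, I would pick the other trivial factorization $\U_0^1 := \I_r$ and $\widehat{\S}_0^1 := \K_0^1$, which is legitimate by the remark that any orthogonal decomposition with $\U_0^1$ having orthonormal columns is allowed. With this choice, $\widehat \F_0(t,\S_0) = \U_0^{1,\top} \F_0(t,\U_0^1 \S_0) = \F_0(t,\S_0)$, so the S-equation is driven by exactly the same right-hand side as the K-equation.

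Using the backward formulation of the S-step (as discussed in the remarks following the algorithm for $\Phi^{(i)}$), we solve $\dot \S_0 = \F_0(t,\S_0)$ backward from $t_1$ to $t_0$ with final value $\S_0(t_1) = \widehat{\S}_0^1 = \K_0(t_1)$. This is literally the same initial-value problem as the K-equation, but with data prescribed at $t_1$; uniqueness of ODE solutions therefore yields $\S_0(t) \equiv \K_0(t)$ on $[t_0,t_1]$, and in particular $\widetilde{\S}_0^0 = \S_0(t_0) = \K_0(t_0) = \I_r$. Consequently $C^1 = \ten_0\bigl(\widetilde{\S}_0^0 \Q_0^{0,\top}\bigr) = \ten_0\bigl(\mat_0(C^0)\bigr) = C^0$, and together with $\U_0^1 = \U_0^0 = \I_r$ and $\U_j^1 = \U_j^0$ for $j \geq 1$, one reassembles $Y^1 = Y^0$.

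The main subtlety is to verify that both trivial orthogonal factorizations are simultaneously admissible: the first relies on Lemma~\ref{lem:ttn-orth} inherited from the orthonormality hypothesis, and the second on the paper's explicit latitude in choosing decompositions with $\U_0^1$ having orthonormal columns. Once these are in place, the triviality of the S-step reduces to ODE uniqueness, which works identically for autonomous and non-autonomous~$F$ under the backward formulation.
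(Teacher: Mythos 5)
Your proof is correct and follows essentially the same route as the paper: the decisive steps are the trivial orthogonal decomposition $\K_0^1 = \I_r\,\K_0^1$ (i.e.\ $\U_0^1=\I_r$) and the observation that, in the backward formulation, the S-equation then coincides with the K-equation, so that uniqueness of ODE solutions gives $\widetilde{\S}_0^0 = \S_0(t_0)=\K_0(t_0)$ and hence $C^1=C^0$. One small caveat: your additional choice of the trivial first decomposition $\Q_0^0=\mat_0(C^0)^\top$, $\S_0^0=\I_r$ is unnecessary and not always admissible --- in the recursive TTN integrator the extended Tucker integrator is applied to $Y_{\tau_i}^0=X_{\tau_i}^0\times_0\S_{\tau_i}^{0,\top}$, whose core tensor $C_{\tau_i}^0\times_0\S_{\tau_i}^{0,\top}$ does not in general have orthonormal columns in its $\mat_0(\cdot)^\top$, so Lemma~\ref{lem:ttn-orth} does not apply there. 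The paper simply keeps a generic QR decomposition $\mat_0(C^0)^\top=\Q_0^0\S_0^{0,\top}$ at this stage, which changes nothing in the rest of your argument: one then gets $\widetilde{\S}_0^0=\S_0^0$ and $C^1=\ten_0(\S_0^0\Q_0^{0,\top})=\ten_0(\mat_0(C^0))=C^0$ all the same.
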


\begin{proof}
	In the first step of the subflow $\Phi^{(0)}$, we matricize the core tensor $C^0$ in the zero mode and we perform a QR decomposition,
	$$ \mat_0( C^0)^\top = \Q_0^0 \S_0^{0,\top}.$$
	We define
	$\textbf{V}_0^{0,\top} = \mat_0( \text{Ten}_0( \Q_0^{0,\top}) \bigtimes_{l=1}^d \U_l^{0,\top}) $
	and we set
	$ \K_0^0 = \I_r \S_0^0 \in \R^{r \times r} .$
	The next step consists of solving the differential equation 
	\begin{align*}
	&\dot{\K}_0 (t) = 
	\mat_0 ( 
	F(
	t, 
	\text{Ten}_0( \K_0(t) \textbf{V}_0^{0, \top} ) 
	) 
	\textbf{V}_0^0,
	\\
	&\K_0(t_0) = \K_0^0.
	\end{align*}
	We define
	$ \K_0^1 = \K_0(t_1) \in \R^{r \times r}  $
	\bcl
	and we use the trivial orthogonal decomposition (it is irrelevant that this is not the QR decomposition), 
	\ecl
	$$ \K_0^1 = \I_r \K_0^1 .$$
	Finally, we solve the differential equation
	\begin{align*}
	&\dot{\S}_0 (t) = 
	\I_r^{\top} 
	\mat_0(
	F(
	t, 
	\text{Ten}_0( \I_r \S_0(t) \textbf{V}_0^{0,\top} ) 
	) 
	\textbf{V}_0^0,
	\\
	& 
	\S_0(t_1) = \K_0^1.
	\end{align*}
	\bcl This is the same differential equation as for $\K_0$, now solved backwards in time, and so we have
	$$ \widetilde \S_0^0 =\S_0(t_0) = \K_0(t_0) = \S_0^0 .$$
	\ecl
	Therefore, $C^1=C^0$ and we conclude that $\Phi^{(0)}(Y^0) = Y^0$.
\end{proof}

We can now introduce the extended Tucker integrator \bcl as given by Algorithm~\ref{alg:extended} below.  The adjective `extended' refers to the fact that we now approximate the solution to a differential equation for $r$ Tucker tensors with the same basis matrices rather than just one Tucker tensor. This extension is nontrivial but turns out to be remarkably simple.\ecl
\\

\begin{algorithm}[H]
\label{alg:extended}
	\caption{Extended Tucker Integrator}
	
	\SetAlgoLined
	\KwData{ 
		$Y^0 = C^0 \times_0 \I_r \bigtimes_{j=1}^d \U_j^0 \text{in factorized form}, F(t,Y), t_0, t_1$ 	
	}
	\KwResult{$Y^1 = C^1 \times_0 \I_r \bigtimes_{j=1}^d \U_j^1$ in factorized form}
	\Begin{
		Set $ Y^{[0]} = Y^0$
		
		\For{$i=1 \dots d$}{
			Compute $ Y^{[i]} = \Phi^{(i)}( Y^{[i-1]}) $ in factorized form \bcl by Algorithm 1 \ecl
		}
		Compute $ Y^1 = \Psi(Y^{[d]})$ in factorized form \bcl by Algorithm 2 \ecl
	}
\end{algorithm}

\section{Recursive tree tensor network integrator}
\label{sec:ttni}
We now come to the central algorithmic section of this paper. We derive an integrator for orthonormal tree tensor networks,
\bcl
which in every time step updates the orthonormal-basis matrices $\U_\ell(t)$ of the leaves and the orthonormality-constrained connection tensors $C_\tau(t)$ of the other vertices of the tree. This is done  without ever computing the entries of the high-dimensional tensor $Y(t)$ that has the tree tensor network representation given by the factors $\U_\ell(t)$ and $C_\tau(t)$ and (approximately) solves the projected differential equation \eqref{eq:projEq} --- provided that the function $F$  can be evaluated at the tree tensor network $Y(t)$ using only its factors.
\ecl

\subsection{Derivation}
Let $\bar\tau\in\mathcal{T}$ be a given tree with the set of leaves $L(\bar\tau)=\{1,\dots,d\}$ and $(r_\tau)_{\tau\in T(\bar\tau)}$ a specified family of tree ranks, where we assume $r_{\bar\tau}=1$. 
For  each subtree $\tau = (\tau_1, \dots, \tau_m) \in T(\bar\tau)$ we introduce the space
\begin{equation} \label{V-tau}
\mathcal{V}_\tau := \mathbb{R}^{r_\tau \times n_{\tau_1} \times \dots \times n_{\tau_m}} \ .
\end{equation}

In the following, we associate to each subtree $\tau$ of the given tree $\bar\tau$ a tensor-valued function $F_{\tau}:[0,t^*]\times \mathcal{V}_\tau \to \mathcal{V}_\tau$. Its actual recursive construction, starting from the root with $F_{\bar\tau}=F$ and passing to the leaves, will be given in the next subsection.

Consider a tree $\tau = (\tau_1, \dots, \tau_m)$ and
an extended Tucker tensor $Y_\tau^0$ associated to the tree $\tau$,
$$ Y_{\tau}^0  = C_\tau^0 \times_0 \I_\tau \bigtimes_{i=1}^m \U_{\tau_i}^0 \ . $$
Applying the extended Tucker integrator with the function $F_\tau$ we have that
$$ 	Y_\tau^1 = \Psi_\tau \circ \Phi^{(m)}_\tau \circ \dots \circ \Phi^{(1)}_\tau (Y_\tau^0) \ .$$
We recall that the subflow $\Phi^{(i)}_\tau$ gives the update process of the basis matrix $\U_{\tau_i}^0 \in \R^{n_{\tau_i} \times r_{\tau_i}}$. The extra subscript $\tau$ indicates that the subflow is computed for the function $F_\tau$.

We have two cases: 

\begin{enumerate}[(i)]
	\item
		If $\tau_i$ is a leaf, we directly apply the subflow $\Phi^{(i)}_\tau$ and update the basis matrix. 
	\item
		Else, we apply $\Phi^{(i)}_\tau$ only approximately (but call the procedure still $\Phi^{(i)}_\tau$). We tensorize the basis matrix and we construct new initial data $Y_{\tau_i}^0$ and a function $F_{\tau_i}$. We iterate the procedure in a recursive way, reducing the dimensionality of the problem at each recursion.
		\end{enumerate}

We are now in a position to formulate the recursive tree tensor network (TTN) integrator. It has the same general structure as the extended Tucker integrator as given by Algorithm~\ref{alg:extended}.

The difference to the extended Tucker integrator is that now the subflow $\Phi_\tau^{(i)}$ is no longer the same, but it recursively uses the TTN integrator for the subtrees. This approximate subflow is defined in close analogy to the subflow $\Phi^{(i)}$ for Tucker tensors, but the first differential equation is solved only approximately unless $\tau_i$ is a leaf. \bcl We remark that in the following Algorithm~\ref{alg:Phi-tau-i} the tensors $Y_{\tau_i}$ correspond to a tensorization of the matrices $\K_i$ in Algorithm~\ref{alg:Phi-i}; see the next two subsections for details of this correspondence.\ecl
\bigskip

\begin{algorithm}[H]
\label{alg:ttn}
	\caption{Recursive TTN Integrator}
	
	\SetAlgoLined
	\KwData{ 
		tree $\tau=(\tau_1,\dots,\tau_m)$, TTN in factorized form\\
		$\qquad Y_\tau^0 = C_\tau^0 \times_0 \I_\tau \bigtimes_{j=1}^m \U_{\tau_j}^0 
		\text{ with }  \U_{\tau_j}^0 =\mat_0(X_{\tau_j}^0)^\top$,\\
		$\qquad\text{function }F_\tau(t,Y_\tau), t_0, t_1$ 	
	}
	\KwResult{TTN $\,Y_\tau^1 = C_\tau^1 \times_0 \I_\tau \bigtimes_{j=1}^m \U_{\tau_j}^1
	         \text{ with } \U_{\tau_j}^1 =\mat_0(X_{\tau_j}^1)^\top$\\
		 $\qquad$in factorized form}
	\Begin{
		set $ Y_\tau^{[0]} = Y_\tau^0$
		
		\For{$i=1 \dots m$}{
			compute $ Y_\tau^{[i]} = \Phi_\tau^{(i)}( Y_\tau^{[i-1]}) $ in factorized form \bcl by Algorithm~\ref{alg:Phi-tau-i} \ecl
		}
		compute $ Y_\tau^1 = \Psi_\tau(Y_\tau^{[m]})$ in factorized form 
		\bcl by Algorithm~\ref{alg:Psi-tau} \ecl
	}
\end{algorithm}

\bigskip 

\begin{algorithm}[H] \label{alg:Phi-tau-i}
	\caption{Subflow $\Phi_\tau^{(i)}$ }
	
	\SetAlgoLined
	\KwData{ 
		tree $\tau=(\tau_1,\dots,\tau_m)$, TTN in factorized form\\
		$\qquad Y_\tau^0 = C_\tau^0 \times_0 \I_\tau \bigtimes_{j=1}^m \U_{\tau_j}^0 
		\text{ with }  \U_{\tau_j}^0 =\mat_0(X_{\tau_j}^0)^\top$,\\
		$\qquad\text{function }F_\tau(t,Y_\tau), t_0, t_1$ 	
	}
	\KwResult{TTN $\,Y_\tau^1 = C_\tau^1 \times_0 \I_r \bigtimes_{j=1}^m \U_{\tau_j}^1
	         \text{ with } \U_{\tau_j}^1 =\mat_0(X_{\tau_j}^1)^\top$\\
		 $\qquad$in factorized form}

	\Begin{
		set $ \U_{\tau_j}^1 = \U_{\tau_j}^0 \quad \forall j \neq i$
		
		compute the QR factorization $ \mat_i( C_\tau^0)^\top = \Q_{\tau_i}^0 \S_{\tau_i}^{0,\top}	$ \\[1mm]
					
		set $Y_{\tau_i}^0 = X_{\tau_i}^0 \times_0 \S_{\tau_i}^{0,\top}$   \\[2mm]
		
		
		{\bf if} $\tau_i=\ell$ is a leaf, {\bf then} solve the $n_\ell\times r_\ell$ matrix differential equation \\
		\qquad $ \dot Y_{\tau_i} (t) = F_{\tau_i}(t, Y_{\tau_i}(t)) $ \ 
		with initial value $Y_{\tau_i}(t_0) = Y_{\tau_i}^0$ \\
		$\qquad$and return $ Y_{\tau_i}^1 = Y_{\tau_i}(t_1)$\\
		{\bf else}\\
		$\qquad \text{compute }Y_{\tau_i}^1 = \text{\it Recursive TTN Integrator } (\tau_i, Y_{\tau_i}^0, F_{\tau_i}, t_0,t_1)$\\[2mm]
				
		compute the QR decomposition $\mat_0(C_{\tau_i}^1)^\top = {\widehat {\mathbf{Q}}}_{\tau_i}^1  \widehat{\S}_{\tau_i}^1$, where \\
		$\qquad C_{\tau_i}^1$ is the connecting tensor of $Y_{\tau_i}^1$ \\[1mm]
		
		set $\U_{\tau_i}^1 = \mat_0(X_{\tau_i}^1)^\top$, where the TTN $X_{\tau_i}^1$ is obtained from $Y_{\tau_i}^1$ by \\
		$\qquad$replacing
		the connecting tensor with $\widehat C_{\tau_i}^1= \ten_0(\widehat {\mathbf{Q}}_{\tau_i}^{1,T})$  \\[1mm]

                 solve the $r_{\tau_i}\times r_{\tau_i}$ matrix differential equation\\
		\qquad $ \dot{\S}_{\tau_i} (t) = - \widehat\F_{\tau_i}(t, {\S}_{\tau_i}(t)) $ \ 
		with initial value $\S_{\tau_i}(t_0) = \widehat{\S}_{\tau_i}^1$ \\
		$\qquad$and return $ \widetilde{\S}_{\tau_i}^0 = \S_{\tau_i}( t_1) $; here\\
		 $\qquad \qquad \widehat\F_{\tau_i}(t, {\S}_{\tau_i}) = 
		 \U_{\tau_i}^{1,\top}\mat_0\bigl(F_{\tau_i}(t,X_{\tau_i}^1\times_0 {\S}_{\tau_i}^\top) \bigr)^\top$ \\[1mm]

		
%
		
		set $ C_\tau^1 = \text{Ten}_i ( \widetilde{\S}_{\tau_i}^0 \Q_{\tau_i}^{0,\top}  )$ 
		
	}
\end{algorithm}

\bigskip
The subflow $\Psi_\tau$ is the same as for the Tucker integrator, for the function $F_\tau$ instead of $F$.
\bigskip

\begin{algorithm}[H]
\label{alg:Psi-tau}
	\caption{Subflow $\Psi_\tau$ }
	
	\SetAlgoLined
	\KwData{ 
		tree $\tau=(\tau_1,\dots,\tau_m)$, TTN in factorized form\\
		$\qquad Y_\tau^0 = C_\tau^0 \times_0 \I_\tau \bigtimes_{j=1}^m \U_{\tau_j}^0 
		\text{ with }  \U_{\tau_j}^0 =\mat_0(X_{\tau_j}^0)^\top$,\\
		$\qquad\text{function }F_\tau(t,Y_\tau), t_0, t_1$ 	
	}
	\KwResult{TTN $\,Y_\tau^1 = C_\tau^1 \times_0 \I_\tau \bigtimes_{j=1}^m \U_{\tau_j}^1
	         \text{ with } \U_{\tau_j}^1 =\mat_0(X_{\tau_j}^1)^\top$\\
		 $\qquad$in factorized form}
	\Begin{
		set $ \U_{\tau_j}^1 = \U_{\tau_j}^0 \quad \forall j=1, \dots , m$.
		
		
		solve the $r_\tau \times r_{\tau_1}\times\dots\times r_{\tau_m}$ tensor differential equation\\
		$\qquad \dot C_\tau(t) = \widetilde F_\tau(t, C_\tau(t))$ \ with initial value $C_\tau(t_0)=C_\tau^0$\\
		$\qquad$and return $C_\tau^1=C_\tau(t_1)$; here\\
		$\qquad \qquad \widetilde F_\tau(t, C_\tau) = F_\tau(t, C_\tau \bigtimes_{j=1}^m \U_{\tau_j}^1) \bigtimes_{j=1}^m \U_{\tau_j}^{1,\top}$
	}
\end{algorithm}

\bigskip
The differential equations for $Y_{\tau_i}(t)$, $\S_{\tau_i}(t)$ in Algorithm~\ref{alg:Phi-tau-i} 
and $C_\tau(t)$ in Algorithm~\ref{alg:Psi-tau} need to be solved approximately by a standard numerical integrator, unless $F(t,Y)$ is independent of $Y$ (which then implies that also the functions $F_\tau(t,Y)$ are independent of $Y$).

The efficiency of the implementation of this algorithm depends on the possibility to evaluate the functions $F_{\tau}$, ${\widehat\F}_{\tau}$ and $\widetilde F_\tau$  efficiently for all subtrees~$\tau$ of~$\bar\tau$, without explicitly forming large matrices or tensors whose dimension exceeds by far that of the basis matrices and connecting tensors. This is the case if $F$ maps TTNs into linear combinations of TTNs of moderate tree rank whose factors can be computed directly from the basis matrices and connecting tensors without actually computing the entries of the TTN.

%
%
%
%
%
%
%
%
%
%
%
%
%
\subsection{Constructing $F_\tau$ and $Y_\tau^0$ via restrictions/prolongations}
\label{subsec:F-tau}

In a recursion that passes from the root to the leaves of $\bar\tau$, we construct for each subtree $\tau$ of $\bar\tau$  the tensor-valued function $F_\tau$  that is used in the recursive TTN integrator.
%
We note that $\mathcal{V}_{\bar{\tau}}$ is isomorphic to $\mathbb{R}^{n_1 \times \dots \times n_d}$ (since $r_{\bar\tau}=1$) and
we start the construction by setting $F_{\bar\tau}=F:[0,t^*] \times \mathcal{V}_{\bar\tau} \rightarrow \mathcal{V}_{\bar\tau}$.
Given a subtree $\tau = (\tau_1, \dots, \tau_m) \in \mathcal{T}$, we now assume by induction that 
$$
 F_\tau: [0,t^*] \times \mathcal{V}_\tau \rightarrow \mathcal{V}_\tau	
$$
is already defined. For each $i=1,\dots,m$, we need to determine the tensor-valued function $F_{\tau_i}$ that appears in the subflow $\Phi_\tau^{(i)}$ of the recursive TTN integrator.
%
For the initial data
$ Y_\tau^0 = C_\tau^0 \times_0 \I_\tau \bigtimes_{i=1}^m \U_{\tau_i}^0$
the subflow $\Phi_\tau^{(i)}$ given by Algorithm~\ref{alg:Phi-tau-i} first computes the QR decomposition 
$$ 
\mat_i( C_\tau^0)^\top = \Q_{\tau_i}^0 \S_{\tau_i}^{0,\top} ,
$$
where $\Q_{\tau_i}^0 \in \R^{r_\tau r_{\neg \tau_i}\times r_{\tau_i}}$ with $r_{\neg \tau_i}=\prod_{j\ne i} r_{\tau_j}$
has orthonormal columns and 
$\S_{\tau_i}^{0} \in \R^{r_{\tau_i}\times r_{\tau_i}}$. Since
\begin{align*}
Y_\tau^0 = C_\tau^0 \times_0 \I_\tau \bigtimes_{i=1}^m \U_{\tau_i}^0 &= \text{Ten}_i( \S_{\tau_i}^0 \Q_{\tau_i}^{0,\top}) \times_0 \I_\tau \bigtimes_{i=1}^m \U_{\tau_i}^0
\\
&= \text{Ten}_i( \Q_{\tau_i}^{0,\top}) \times_0 \I_\tau \bigtimes_{j\ne i} \U_{\tau_j}^{0} \times_i ( \U_{\tau_i}^0 \S_{\tau_i}^0),
\end{align*}
we then have the SVD-like decomposition
\begin{equation} \label{Y-USV}
\mat_i( Y_\tau^0) = \U_{\tau_i}^0 \S_{\tau_i}^0 \V_{\tau_i}^{0,\top}
\end{equation}
with the (computationally inaccessible) matrix
$$\textbf{V}_{\tau_i}^{0} = \mat_i( \text{Ten}_i( \Q_{\tau_i}^{0,\top}) \times_0 \I_\tau\bigtimes_{j\ne i} \U_{\tau_j}^{0})^\top
\in \R^{r_\tau n_{\neg \tau_i} \times r_{\tau_i}}
$$
for $n_{\neg \tau_i}=\prod_{j\ne i} n_{\tau_j}=n_\tau/n_{\tau_i}$. We note that both $\U_{\tau_i}^0$ and $\V_{\tau_i}^0$ have orthonormal columns.

Like in Algorithm~\ref{alg:Phi-i} for the subflow $\Phi^{(i)}$ of the Tucker integrator, we consider the differential equation for ${\K}_{\tau_i} (t) \in \R^{n_{\tau_i}\times r_{\tau_i}}$,
\begin{equation} \label{eq:Ktau1}
	\begin{split}  
		& \dot{\K}_{\tau_i} (t) = 	\F_{\tau_i}(t,  \K_{\tau_i}(t)),
		\\
		& \K_{\tau_i}(t_0) = \U_{\tau_i}^0 \S_{\tau_i}^0 \ ,
	\end{split}	
\end{equation}
where 
$$ \F_{\tau_i}( \K_{\tau_i}) = \mat_i \bigl( F_\tau (t, 
		\text{Ten}_i( \K_{\tau_i} \V_{\tau_i}^{0, \top}))
		  \bigr)
		\textbf{V}_{\tau_i}^0.
$$

Algorithm~\ref{alg:Phi-tau-i} uses this differential equation in tensorized form
and solves it approximately by recurrence down to the leaves. \bcl This relation is seen as follows: \ecl
By definition of the tree tensor network, there exists ${X}_{\tau_i}^0 \in \mathcal{V}_{\tau_i}$ such that
$$ 
\U_{\tau_i}^0 = \mat_0( {X}_{\tau_i}^0)^\top , \qquad \text{i.e.}, \quad  {X}_{\tau_i}^0 = \ten_0 ( \U_{\tau_i}^{0,\top} ).
$$
This implies that the initial condition in (\ref{eq:Ktau1}) can be rewritten as
$$ \K_{\tau_i}(t_0) = \U_{\tau_i}^0 \S_{\tau_i}^0 = \mat_0( {X}_{\tau_i}^0)^\top \S_{\tau_i}^0 = \mat_0({X}_{\tau_i}^0 \times_0 \S_{\tau_i}^{0,\top} )^\top ,
$$
which is the initial value chosen in Algorithm~\ref{alg:Phi-tau-i}.
We introduce
\begin{equation} Y_{\tau_i}(t) = \ten_0(\K_{\tau_i}(t)^\top), 
\qquad \text{i.e.}, \quad \K_{\tau_i}(t) = \mat_0( {Y}_{\tau_i}(t))^\top.
\end{equation}
By substitution, (\ref{eq:Ktau1}) can be rewritten as the differential equation \bcl that appears in Algorithm~\ref{alg:Phi-tau-i},\ecl
\begin{align*}
	&\dot{Y}_{\tau_i} (t)  = F_{\tau_i}(t, Y_{\tau_i}(t))
	\\
	& Y_{\tau_i}(t_0) = Y_{\tau_i}^0 := {X}_{\tau_i}^0 \times_0 \S_{\tau_i}^{0, \top}  \ ,
\end{align*}
where now
\begin{align}
\label{F-tau-i}
	 F_{\tau_i}(t, Y_{\tau_i}) &= \ten_0\bigl( \F_{\tau_i}(t, \mat_0(Y_{\tau_i})^\top\bigr) 
	 \\
	 \nonumber
	&=	
 	\ten_0\Bigl(
			\bigl(
			\mat_i ( \ 
				F_\tau (t, 
				\text{Ten}_i( \mat_0( Y_{\tau_i})^\top  \V_{\tau_i}^{0, \top})
				)
			)
			\textbf{V}_{\tau_i}^{0}  
			\bigr)^\top
		\Bigr) \ .
\end{align}
The construction of the tensor-valued function $F_{\tau_i}$ becomes more transparent by introducing the {\it prolongation}
\begin{equation}\label{prol}
\pi_{\tau,i}(Y_{\tau_i}) := \ten_i \bigl( (\V_{\tau_i}^{0}\mat_0(Y_{\tau_i}))^\top \bigr) \in \mathcal{V}_\tau  \quad\ \text{for } \  Y_{\tau_i}\in \mathcal{V}_{\tau_i} 
\end{equation}
and the {\it restriction}
\begin{equation}\label{rest}
\pi_{\tau,i}^{\dagger}(Z_\tau) := \ten_0 \bigl( (\mat_i(Z_\tau)\V_{\tau_i}^{0})^\top \bigr) \in \mathcal{V}_{\tau_i},
\quad\ \text{for }\   Z_\tau \in \mathcal{V}_\tau,
\end{equation}
where the tensorization $\ten_0$ is for a matrix in $\R^{r_{\tau_i}\times n_{\tau_i}}$ according to the dimensions of the subtrees of $\tau_i$. 

We note the following properties.

\begin{lemma}
\label{lem:prol-rest} Let $\tau=(\tau_1,\dots,\tau_m)$ and $i=1,\dots,m$.
The restriction $\pi_{\tau,i}^{\dagger}:\mathcal{V}_{\tau} \to \mathcal{V}_{\tau_i}$ is both a left inverse and the adjoint (with respect to the tensor Euclidean inner product) of the prolongation
$\pi_{\tau,i}:\mathcal{V}_{\tau_i} \to \mathcal{V}_{\tau}$, that is,
\begin{align}
\label{left-inv}
\pi_{\tau,i}^{\dagger} ( \pi_{\tau,i}(Y_{\tau_i})  ) &= Y_{\tau_i}  \qquad\ \text{for all } \  Y_{\tau_i}\in \mathcal{V}_{\tau_i} 
\\
\label{phi-adjoint}
\langle \pi_{\tau,i}(Y_{\tau_i}) , Z_\tau \rangle_{\mathcal{V}_{\tau} } 
&= \langle Y_{\tau_i}, \pi_{\tau,i}^{\dagger}(Z_\tau) \rangle_{\mathcal{V}_{\tau_i} } 
\quad\ \text{for all } \  Y_{\tau_i}\in \mathcal{V}_{\tau_i}, \, Z_\tau \in \mathcal{V}_{\tau}.
\end{align}
Moreover, $ \| \pi_{\tau,i}(Y_{\tau_i}) \|_{\mathcal{V}_{\tau} } = \| Y_{\tau_i} \|_{\mathcal{V}_{\tau_i} }$ and
$\| \pi_{\tau,i}^{\dagger}(Z_\tau) \|_{\mathcal{V}_{\tau_i} } \le \| Z_\tau  \|_{\mathcal{V}_{\tau} } $, where the norms are the tensor Euclidean norms.
\end{lemma}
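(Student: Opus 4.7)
The plan is to reduce everything to matrix calculations via the matricizations $\mat_0$ and $\mat_i$, since the operations $\pi_{\tau,i}$ and $\pi_{\tau,i}^\dagger$ are defined by matricizing, multiplying by $\V_{\tau_i}^0$ (or its transpose), and tensorizing. The only nontrivial input is that $\V_{\tau_i}^0$ has orthonormal columns (noted just after \eqref{Y-USV}).

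First I would prove the left-inverse identity \eqref{left-inv}. From the definition \eqref{prol}, $\mat_i(\pi_{\tau,i}(Y_{\tau_i})) = \mat_0(Y_{\tau_i})^\top \V_{\tau_i}^{0,\top}$. Applying \eqref{rest} and using $\V_{\tau_i}^{0,\top}\V_{\tau_i}^0 = \I_{r_{\tau_i}}$ gives
$\pi_{\tau,i}^\dagger(\pi_{\tau,i}(Y_{\tau_i})) = \ten_0\bigl( (\mat_0(Y_{\tau_i})^\top \V_{\tau_i}^{0,\top} \V_{\tau_i}^0)^\top\bigr) = \ten_0(\mat_0(Y_{\tau_i})) = Y_{\tau_i}.$

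Next I would prove the adjoint identity \eqref{phi-adjoint}. The tensor Euclidean inner product equals the Frobenius inner product of any matricization, so setting $M = \mat_0(Y_{\tau_i})$ and using $\mat_i(\pi_{\tau,i}(Y_{\tau_i}))^\top = \V_{\tau_i}^0 M$,
\begin{equation*}
\langle \pi_{\tau,i}(Y_{\tau_i}), Z_\tau\rangle_{\mathcal{V}_\tau}
= \mathrm{tr}\bigl(\V_{\tau_i}^0 M\, \mat_i(Z_\tau)\bigr)
= \mathrm{tr}\bigl(M\, \mat_i(Z_\tau)\V_{\tau_i}^0\bigr)
\end{equation*}
by cyclicity of the trace. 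On the other hand, $\mat_0(\pi_{\tau,i}^\dagger(Z_\tau)) = (\mat_i(Z_\tau)\V_{\tau_i}^0)^\top$, so $\langle Y_{\tau_i}, \pi_{\tau,i}^\dagger(Z_\tau)\rangle_{\mathcal{V}_{\tau_i}} = \mathrm{tr}\bigl(M^\top \cdot (\mat_i(Z_\tau)\V_{\tau_i}^0)^\top\bigr)$, which equals the expression above because $\mathrm{tr}(AB) = \mathrm{tr}(B^\top A^\top)$.

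Finally, the two norm assertions follow formally from the previous two identities. For the isometry, $\|\pi_{\tau,i}(Y_{\tau_i})\|^2 = \langle \pi_{\tau,i}(Y_{\tau_i}), \pi_{\tau,i}(Y_{\tau_i})\rangle = \langle Y_{\tau_i}, \pi_{\tau,i}^\dagger(\pi_{\tau,i}(Y_{\tau_i}))\rangle = \|Y_{\tau_i}\|^2$ by \eqref{phi-adjoint} and \eqref{left-inv}. For the contraction, Cauchy--Schwarz combined with \eqref{phi-adjoint} and the just-proved isometry gives
\begin{equation*}
\|\pi_{\tau,i}^\dagger(Z_\tau)\|^2 = \langle \pi_{\tau,i}(\pi_{\tau,i}^\dagger(Z_\tau)), Z_\tau\rangle \le \|\pi_{\tau,i}(\pi_{\tau,i}^\dagger(Z_\tau))\|\cdot\|Z_\tau\| = \|\pi_{\tau,i}^\dagger(Z_\tau)\|\cdot\|Z_\tau\|,
\end{equation*}
from which the bound follows. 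The only potential obstacle is bookkeeping of the dimensions in the matricizations/tensorizations, but no real difficulty arises once the shapes of $\mat_0(Y_{\tau_i})$, $\mat_i(Z_\tau)$ and $\V_{\tau_i}^0$ are aligned.
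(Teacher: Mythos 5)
Your proof is correct and takes essentially the same route as the paper's: both reduce everything to matricizations, use the orthonormality $\V_{\tau_i}^{0,\top}\V_{\tau_i}^{0}=\I$ for \eqref{left-inv}, and identify the tensor Euclidean inner product with a Frobenius inner product of matricizations (plus invariance under transposition / cyclicity of the trace) for \eqref{phi-adjoint}. The only harmless variation is that you deduce the two norm statements formally from \eqref{left-inv} and \eqref{phi-adjoint} via Cauchy--Schwarz, whereas the paper reads them off directly from the definitions using the orthonormal columns of $\V_{\tau_i}^{0}$ and the inequality $\|\A\B\|_F\le\|\A\|_2\,\|\B\|_F$.
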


\begin{proof} Since  $\V_{\tau_i}^{0,\top} \V_{\tau_i}^{0}=\I$, we obtain \eqref{left-inv}. Using that the tensorization $\ten_i$ is the adjoint of
the matricization $\mat_i$ for the Frobenius inner product and that taking transposes in both matrices of a Frobenius inner product does not change the inner product, we arrive at \eqref{phi-adjoint}. The norm equality  follows from the definition \eqref{prol} and the fact that the matrix $\V_{\tau_i}^0$ has orthonormal columns. The norm bound follows from
 \eqref{rest} on noting the general matrix norm inequality $\| \A \B \|_F \le \| \A \|_2 \,\| \B \|_F$  and the fact that  $\| \V_{\tau_i}^{0,\top} \|_2 =1$.
\end{proof}

We emphasize that the mappings $ \pi_{\tau,i}$ and $ \pi_{\tau,i}^\dagger$ depend on the initial data~$Y_\tau^0$. We observe that we can write \eqref{F-tau-i} more compactly as
$
F_{\tau_i} = \pi_{\tau,i}^\dagger \circ F_{\tau} \circ \pi_{\tau,i}.
$
For the initial data we find from \eqref{Y-USV} and \eqref{eq:Ktau1} that
$$
Y_{\tau_i}^0 =  \ten_0(\K_{\tau_i}(t_0)^\top) = \ten_0 \bigl( (\U_{\tau_i}^0 \S_{\tau_i}^0)^\top\bigr) = 
\ten_0 \bigl( (\mat_i(Y_\tau^0) \V_{\tau_i}^0)^\top ) = \pi_{\tau,i}^\dagger(Y_\tau^0).
$$
We thus arrive at the following.

\begin{definition} \label{def:F-tau}
For the given tensor-valued function $F_{\bar\tau}=F: [t_0,t^*] \times \mathcal{V}_{\bar\tau} \to  \mathcal{V}_{\bar\tau}$ and a tree tensor network $Y_{\bar\tau}^0 \in \mathcal{M}_{\bar\tau}$,
we define recursively for each tree $\tau=(\tau_1,\dots,\tau_m) \le \bar\tau$ and for $i=1,\dots,m$
\begin{align*}
F_{\tau_i} &= \pi_{\tau,i}^\dagger \circ F_{\tau} \circ \pi_{\tau,i}
\\[1mm]
Y_{\tau_i}^0 &= \pi_{\tau,i}^\dagger(Y_\tau^0).
\end{align*}
\end{definition}

\noindent
These are the nonlinear operators and initial data that are used in the recursive TTN integrator. We remark that their construction has a formal similarity to that of operators and functions in multilevel methods; cf.~\cite{Hac85}.

An important observation is the following.

\begin{lemma} \label{lem:rest-rank} 
If the initial tree tensor network $Y_{\bar\tau}^0$  has full tree rank $(r_\sigma)_{\sigma\le\bar\tau}$, then $Y_{\tau}^0$ has full tree rank $(r_\sigma)_{\sigma\le\tau}$
for every subtree $\tau\le\bar\tau$.
\end{lemma}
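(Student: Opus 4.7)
The plan is to proceed by induction along the paths from $\bar\tau$ down, by proving the following single-step claim: if $Y_\tau^0$ has full tree rank $(r_\sigma)_{\sigma\le\tau}$ for some subtree $\tau=(\tau_1,\dots,\tau_m)\le\bar\tau$, then $Y_{\tau_i}^0=\pi_{\tau,i}^\dagger(Y_\tau^0)$ has full tree rank $(r_\sigma)_{\sigma\le\tau_i}$ for every $i=1,\dots,m$. Applying this along the path from $\bar\tau$ to any $\tau\le\bar\tau$ then yields the lemma.

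From the derivation preceding Definition~\ref{def:F-tau}, one has $Y_{\tau_i}^0 = X_{\tau_i}^0 \times_0 \S_{\tau_i}^{0,\top}$, where $\S_{\tau_i}^{0,\top}\in\R^{r_{\tau_i}\times r_{\tau_i}}$ is the triangular factor in the QR decomposition $\mat_i(C_\tau^0)^\top = \Q_{\tau_i}^0 \S_{\tau_i}^{0,\top}$. When $\tau_i=((\tau_i)_1,\dots,(\tau_i)_{m_i})$ is not a leaf, expanding $X_{\tau_i}^0 = C_{\tau_i}^0 \times_0 \I_{\tau_i} \bigtimes_{j=1}^{m_i} \U_{(\tau_i)_j}^0$ shows that $Y_{\tau_i}^0$ is the TTN on $\tau_i$ whose root connection tensor is $\widetilde C_{\tau_i}^0 := C_{\tau_i}^0 \times_0 \S_{\tau_i}^{0,\top}$, while all deeper connection tensors and leaf basis matrices coincide with those of $Y_{\bar\tau}^0$ below $\tau_i$ and are therefore full rank by hypothesis. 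When $\tau_i=\ell$ is a leaf, $Y_\ell^0 = \S_\ell^{0,\top}\U_\ell^{0,\top}$, and the required full-rank condition on $\mat_0(Y_\ell^0)^\top = \U_\ell^0 \S_\ell^0$ is equivalent to $\S_\ell^0$ being invertible. Thus in both cases the single-step claim reduces to the invertibility of $\S_{\tau_i}^0$.

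This is the only substantive step, and it is short: by hypothesis $C_\tau^0$ has full multilinear rank $(r_\tau,r_{\tau_1},\dots,r_{\tau_m})$, so $\mat_i(C_\tau^0)\in\R^{r_{\tau_i}\times r_\tau r_{\neg\tau_i}}$ has rank $r_{\tau_i}$, and consequently the tall matrix $\mat_i(C_\tau^0)^\top$ has full column rank $r_{\tau_i}$. Hence the square upper-triangular factor $\S_{\tau_i}^{0,\top}$ in its economy QR has nonzero diagonal and is invertible. Mode-$0$ multiplication by this invertible matrix preserves the multilinear rank of $C_{\tau_i}^0$ in every mode, so $\widetilde C_{\tau_i}^0$ has full multilinear rank $(r_{\tau_i}, r_{(\tau_i)_1},\dots,r_{(\tau_i)_{m_i}})$; in the leaf case, invertibility of $\S_\ell^0$ directly yields full rank of $\U_\ell^0\S_\ell^0$. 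This completes the single-step argument, and the induction finishes the proof. The main (and essentially only) obstacle is translating ``full multilinear rank of $C_\tau^0$'' into invertibility of the $R$-factor $\S_{\tau_i}^0$; everything else is bookkeeping on the TTN factors.
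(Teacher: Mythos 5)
Your proof is correct and follows essentially the same route as the paper: write $Y_{\tau_i}^0 = X_{\tau_i}^0 \times_0 \S_{\tau_i}^{0,\top}$, deduce invertibility of $\S_{\tau_i}^0$ from the full multilinear rank of the root connection tensor of $Y_\tau^0$, note that mode-$0$ multiplication by an invertible matrix preserves full tree rank, and induct down the tree. You merely spell out the invertibility of the $R$-factor (which the paper asserts without detail) and treat the leaf case explicitly; nothing differs in substance.
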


\begin{proof} Let $\tau=(\tau_1,\dots,\tau_m)\in T(\bar\tau)$ and $i=1,\dots,m$. 
From the above derivation we have, with the tensor network $X_{\tau_i}= \ten_0(\U_{\tau_i}^{0,\top})$ of full tree rank,
$$
Y_{\tau_i}^0= \ten_0((\U_{\tau_i}^0 \S_{\tau_i}^0)^\top) = X_{\tau_i}^0 \times_0  \S_{\tau_i}^{0,\top}.
$$
If $Y_\tau^0$  has full tree rank, then $\S_{\tau_i}^0$ is invertible, and hence also $Y_{\tau_i}^0 $ has full tree rank.
By induction we find that for every subtree $\tau\le\bar\tau$, the restricted initial tensor $Y_\tau^0$ has full tree rank.
\end{proof}

In terms of the manifold (see Lemma~\ref{lem:mf})
\begin{equation}\label{M-tau}
\mathcal{M}_{\tau}=\mathcal{M}(\tau, (n_\ell)_{\ell\in L(\tau)}, (r_\sigma)_{\sigma\le\tau})
\end{equation}
of  tree tensor networks for the tree $\tau\in\mathcal{T}$ of given dimensions $(n_\ell)_{\ell\in L(\tau)}$ and full tree rank $(r_\sigma)_{\sigma\le\tau}$,  Lemma~\ref{lem:rest-rank}  can be restated as saying that for $\tau=(\tau_1,\dots,\tau_m)$ and 
$Y_\tau^0\in\mathcal{M}_{\tau}$,  the restriction $\pi_{\tau,i}^\dagger (Y_\tau^0)$ is in $\mathcal{M}_{\tau_i}$. This statement is not true for an arbitrary $Y_\tau \in \mathcal{M}_{\tau}$ that is different from $Y_\tau^0$ (recall that the chosen restriction operator $\pi_{\tau,i}^\dagger$ depends on $Y_\tau^0$). In particular, a loss of rank occurs if for some $j$, the basis matrices are such that $\U_{\tau_j}^{0,\top}\U_{\tau_j}$ is a singular $r_{\tau_j}\times r_{\tau_j}$ matrix. However, for the prolongation we have the following.

\begin{lemma}\label{lem:prol-rank}
Let $\tau=(\tau_1,\dots,\tau_m)\in \mathcal{T}$ and $i=1,\dots,m$. If $\,Y_{\tau_i}\in \mathcal{M}_{\tau_i}$, then the prolongation 
$\pi_{\tau,i}(Y_{\tau_i})$ is in $\mathcal{M}_{\tau}$.
\end{lemma}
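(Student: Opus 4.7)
The plan is to exhibit an explicit (not necessarily orthonormal) TTN representation of $\pi_{\tau,i}(Y_{\tau_i})$ on the whole tree $\tau$ with full tree rank $(r_\sigma)_{\sigma\le\tau}$, and then invoke the reorthonormalization recipe recalled after Lemma~\ref{lem:ttn-orth} to conclude membership in $\mathcal{M}_\tau$. Unfolding the definition \eqref{prol} by means of the identity $\ten_i(B\,\mat_i(A))=A\times_i B$ and the explicit form $\V_{\tau_i}^{0,\top}=\mat_i(\ten_i(\Q_{\tau_i}^{0,\top})\times_0\I_\tau\bigtimes_{j\ne i}\U_{\tau_j}^0)$ recalled in Section~\ref{subsec:F-tau}, I would first rewrite
\begin{equation*}
\pi_{\tau,i}(Y_{\tau_i}) \,=\, \ten_i(\Q_{\tau_i}^{0,\top}) \times_0 \I_\tau \bigtimes_{j\ne i} \U_{\tau_j}^0 \times_i \mat_0(Y_{\tau_i})^\top.
\end{equation*}
This is already a TTN representation on $\tau$ with root connection tensor $C_\tau^{\mathrm{new}}:=\ten_i(\Q_{\tau_i}^{0,\top})$, with basis matrices $\U_{\tau_j}^{\mathrm{new}}=\U_{\tau_j}^0$ for $j\ne i$ and $\U_{\tau_i}^{\mathrm{new}}=\mat_0(Y_{\tau_i})^\top$, and with the entire substructure below each $\tau_j$ with $j\ne i$ inherited from $Y_\tau^0$ and that below $\tau_i$ inherited from $Y_{\tau_i}$.

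Next I would check the full-rank property at every vertex of $\tau$. For subtrees $\sigma\le\tau_j$ with $j\ne i$ it is inherited from $Y_\tau^0\in\mathcal{M}_\tau$, and for subtrees $\sigma\le\tau_i$ it is exactly the hypothesis $Y_{\tau_i}\in\mathcal{M}_{\tau_i}$. At the new root, the mode-$i$ matricization is $\mat_i(C_\tau^{\mathrm{new}})=\Q_{\tau_i}^{0,\top}$, which has orthonormal rows and hence full row rank $r_{\tau_i}$. For any other mode $j\in\{0,1,\dots,m\}\setminus\{i\}$, the QR factorization $\mat_i(C_\tau^0)=\S_{\tau_i}^0\Q_{\tau_i}^{0,\top}$ gives $C_\tau^0=C_\tau^{\mathrm{new}}\times_i\S_{\tau_i}^0$, so
\begin{equation*}
\mat_j(C_\tau^0) \,=\, \mat_j(C_\tau^{\mathrm{new}})\bigl(\I\otimes\S_{\tau_i}^{0,\top}\otimes\I\bigr),
\end{equation*}
with $\S_{\tau_i}^{0,\top}$ sitting in the Kronecker slot corresponding to mode $i$. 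Because $C_\tau^0$ has full multilinear rank, its mode-$i$ matricization $\S_{\tau_i}^0\Q_{\tau_i}^{0,\top}$ has full row rank $r_{\tau_i}$, which combined with the orthonormality of the rows of $\Q_{\tau_i}^{0,\top}$ forces the square matrix $\S_{\tau_i}^0$ to be invertible. The Kronecker factor above is therefore invertible, so $\mat_j(C_\tau^{\mathrm{new}})$ has the same rank as $\mat_j(C_\tau^0)$, i.e.\ the full rank $r_{\tau_j}$ (or $r_\tau$ for $j=0$).

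Having produced a TTN representation of $\pi_{\tau,i}(Y_{\tau_i})$ on $\tau$ with full tree rank $(r_\sigma)_{\sigma\le\tau}$, I would finish by applying the recursive QR reorthonormalization from the leaves to the root described after Lemma~\ref{lem:ttn-orth}: all leaf basis matrices are already orthonormal (inherited from $Y_\tau^0$ and $Y_{\tau_i}$), and the QR sweep turns each $\mat_0(C_\sigma)^\top$ into a matrix with orthonormal columns while multiplying the triangular factor into the parent connection tensor, preserving both the tensor and its full tree rank. This produces an orthonormal TTN representation of $\pi_{\tau,i}(Y_{\tau_i})$, so it lies in $\mathcal{M}_\tau$. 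The main obstacle I foresee is the rank bookkeeping at the new root: one must keep the Kronecker slot structure of the relation between $C_\tau^0$ and $C_\tau^{\mathrm{new}}$ straight in order to extract invertibility of the transforming factor, after which everything else is direct inheritance from $Y_\tau^0$ and $Y_{\tau_i}$.
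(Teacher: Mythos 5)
Your proposal is correct and follows essentially the same route as the paper: both rewrite $\pi_{\tau,i}(Y_{\tau_i})$ via the identity $\ten_i(B\,\mat_i(A))=A\times_i B$ as the tree tensor network $\ten_i(\Q_{\tau_i}^{0,\top})\times_0\I_\tau\bigtimes_{j\ne i}\U_{\tau_j}^0\times_i\mat_0(Y_{\tau_i})^\top$ on the tree $\tau$. The paper simply asserts that this representation has full tree rank, whereas you supply the verification (invertibility of $\S_{\tau_i}^0$ and the resulting rank equalities at the root connection tensor) and the final reorthonormalization, both of which are correct and merely make explicit what the paper leaves implicit.
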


\begin{proof}
We have, using the definition of $\V_{\tau_i}^{0}$ and writing $\U_{\tau_i}:=\mat_0(Y_{\tau_i})^\top$,
\begin{align*}
\pi_{\tau,i}(Y_{\tau_i}) &= \ten_i \bigl( (\V_{\tau_i}^{0}\mat_0(Y_{\tau_i}))^\top \bigr) 
\\
&=  \ten_i \bigl( (\mat_i( \text{Ten}_i( \Q_{\tau_i}^{0,\top}) \times_0 \I_\tau\bigtimes_{j\ne i} \U_{\tau_j}^{0})^\top \mat_0(Y_{\tau_i}))^\top \bigr)
\\
&= \ten_i \bigl( \U_{\tau_i} \mat_i( \text{Ten}_i( \Q_{\tau_i}^{0,\top}) \times_0 \I_\tau\bigtimes_{j\ne i} \U_{\tau_j}^{0}) \bigr)
\\
&= \text{Ten}_i( \Q_{\tau_i}^{0,\top}) \times_0 \I_\tau\bigtimes_{j\ne i} \U_{\tau_j}^{0} \times_i \U_{\tau_i},
\end{align*}
which is of full tree rank.
\end{proof}


\subsection{QR decomposition} \bcl We now explain how the first QR decomposition in Algorithm~\ref{alg:Phi-i} is related to that of Algorithm~\ref{alg:Phi-tau-i}. 
We consider a tree $\tau = (\tau_1, \dots, \tau_m) \in \mathcal{T}$ and let $\tau$  take the role of $\tau_i$ in Algorithm~\ref{alg:Phi-tau-i} for ease of notation. In the extension of  Algorithm~\ref{alg:Phi-i} via \eqref{eq:Ktau1}, we would need the QR-decomposition of $\K_{\tau}^1\in \R^{n_\tau\times r_\tau}$, where we recall that $n_\tau = \prod_{j=1}^m n_{\tau_j}$ can get prohibitively large.
This difficulty is overcome  using that the tree tensor network is orthonormal: 
the QR-decomposition of the full matrix $\K_{\tau}^1$ is equivalent to the QR-decomposition of the matricization of a small core tensor. In fact, by construction we have that
$$ 
\K_{\tau}^1 = \mat_0( Y_{\tau}^1)^\top ,  $$
where
$$ Y_\tau^1 = C_\tau^1 \times_0 \I_\tau \bigtimes_{i=1}^m \U_{\tau_i}^1 \ .$$
This implies that
$$ \K_{\tau}^1 = \Big( \bigotimes_{i=1}^m \U_{\tau_i}^1 \Big) \mat_0(C_\tau^1)^\top .$$
Since the Kronecker product of orthonormal matrices is orthonormal, it suffices to do a QR-decomposition of the comparatively small matrix $\mat_0(C_\tau^1)^\top \in \R^{ r_{\tau_1}\dots r_{\tau_m}\times r_\tau}$, as is done in Algorithm~\ref{alg:Phi-tau-i}.
\ecl

\subsection{Efficient computation of the prolongation $\pi_{\tau,i}$}
The construction  of the extremely large matrix $\V_{\tau_i}^{0,\top}$ appearing in the integrator must be avoided. In fact, recalling that
$$\textbf{V}_{\tau_i}^{0, \top} = \mat_i( \text{Ten}_i( \Q_{\tau_i}^{0,\top}) \bigtimes_{j\ne i} \U_{\tau_j}^{0})
\in \R^{r_{\tau_i} \times r_\tau n_{\neg \tau_i}},
$$
the mapping $\pi_{\tau,i}$ can be easily computed,
\begin{align*}
	\pi_{\tau,i}(Y) 
	& = \ten_i( \mat_0(Y)^\top \V_{\tau_i}^{0, \top} ) 
	\\
	& = \ten_i	(
				\mat_0(Y)^\top 
				\mat_i( \ten_i(\Q_{\tau_i}^{0,\top}) 
				\bigtimes_{j\ne i} \U_{\tau_j}^0 ) 
				)
	\\
	& = \ten_i	\big(
				\mat_i( \ten_i(\Q_{\tau_i}^{0,\top})
				\times_i \mat_0(Y)^\top 
				\bigtimes_{j\ne i} \U_{\tau_j}^0 ) 
				\big)
	\\
	& =  	\ten_i(\Q_{\tau_i}^{0,\top})
			\times_i \mat_0(Y)^\top 
			\bigtimes_{j \ne i} \U_{\tau_j}^0 \ .
\end{align*}
The action of the prolongation $\pi_{\tau,i}$ on the tree tensor $Y \in \mathcal{V}_{\tau_i}$ yields a new larger tree tensor in $\mathcal{V}_{\tau}$ with the core tensor $\ten_i(\Q_{\tau_i}^{0,\top})$.

\subsection{Efficient computation of the restriction $\pi_{\tau,i}^\dagger$}
\label{subsec:phi-tau-dagger}
The computation of the mapping $\pi^\dagger_{\tau_i}(Z_\tau)$ can be done efficiently by contraction if $Z_\tau$ is itself a tree tensor network on the tree $\tau=(\tau_1,\ldots,\tau_m)$ with the same dimensions $n_\tau$ and $n_{\tau_i}$ but possibly different  ranks $s_\tau$ and $s_{\tau_i}$ instead of $r_\tau$ and $r_{\tau_i}$, respectively:
$$ 
Z_\tau = G_{\tau} \times_0 \I_{\tau} \bigtimes_{i=1}^m \W_{\tau_i} \in \mathcal{V}_{\tau}
$$
with the core tensor $G_{\tau}\in \R^{s_\tau\times s_{\tau_1}\times\dots\times s_{\tau_m}}$ (not necessarily of full multilinear rank)
and matrices $\W_{\tau_i}\in \R^{n_{\tau_i} \times s_{\tau_i}}$ (not necessarily of full rank).
We have that
\begin{align*}
	\pi^\dagger_{\tau_i}(Z_\tau) 
	& = \ten_0(\V_{\tau_i}^{0, \top} \mat_i(Z_\tau)^\top)
	\\
	& = \ten_0(
			\Q_{\tau_i}^{0,\top}
			\big( \I_\tau \bigotimes_{j \ne i} \U_{\tau_j}^{0,\top} \big)  
			\mat_i(Z_\tau)^\top
		)
	\\
	& = \ten_0(
			\Q_{\tau_i}^{0,\top}
			\mat_i (G_\tau \times_0 \I_\tau \bigtimes_{j\ne i} ( \U_{\tau_j}^{0,\top} \W_{\tau_j} ) \times_i \W_{\tau_i})^\top )
 	\\
	& = \ten_0(
			\Q_{\tau_i}^{0,\top}
			\mat_i (G_\tau \times_0 \I_\tau \bigtimes_{j\ne i} ( \U_{\tau_j}^{0,\top} \W_{\tau_j} )) ^\top \W_{\tau_i}^\top )
\end{align*}
We define the small matrix
$$ \mathbf{R}_{\tau_i} := \Q_{\tau_i}^{0,\top}
   \mat_i (G_\tau \times_0 \I_\tau \bigtimes_{j\ne i} ( \U_{\tau_j}^{0,\top} \W_{\tau_j} ))^\top 
		\in \mathbb{R}^{r_{\tau_i} \times s_{\tau_i}},		
$$
\bcl
where we note that the product of the large matrices $\U_{\tau_j}^{0}$ and $\W_{\tau_j}$ (with $n_{\tau_j}$ rows, which is prohibitive unless $\tau_j$ is a leaf) can be computed recursively from small matrices: For a tree $\tau=(\tau_1,\dots,\tau_m)$, let
\begin{align*}
 \U_{\tau} &= \mat_0(C_\tau \times_0 \I_\tau \bigtimes_{i=1}^m \U_{\tau_i})^\top \in \R^{n_\tau\times r_\tau}, \\
 \W_{\tau} &= \mat_0(G_\tau \times_0 \I_\tau \bigtimes_{i=1}^m \W_{\tau_i})^\top \in \R^{n_\tau\times s_\tau}.
 \end{align*}
 Then, \eqref{unfolding} shows that the small matrix $ \U_{\tau}^\top  \W_{\tau} \in \R^{r_\tau\times s_\tau}$ equals
 \begin{equation}
 \label{UW}
  \U_{\tau}^\top  \W_{\tau} = \mat_0\bigl(C_\tau \bigtimes_{i=1}^m ( \U_{\tau_i}^\top  \W_{\tau_i} )^\top \bigr)  \,\mat_0(G_\tau)^\top,
 \end{equation}
 and hence this matrix can be computed recursively, passing from the leaves to the root~$\tau$. In this way we compute only  products $\U_\ell^\top \W_\ell$ for the leaves $\ell\in L(\tau)$ and products of matrices whose dimensions depend only on the ranks and orders of the connection tensors.

We return to the above expression for $\pi^\dagger_{\tau_i}(Z_\tau)$ and recall that by definition of the tree tensor network, there exists a tree tensor network $Z_{\tau_i}$ on the tree $\tau_i$ such that
$$
\W_{\tau_i} = \mat_0(Z_{\tau_i})^\top.
$$
We then have \ecl
\begin{align*}
	\pi^\dagger_{\tau_i}(Z_\tau) 
		& = \ten_0(\mathbf{R}_{\tau_i} \W_{\tau_i}^\top)
		= \ten_0(\mathbf{R}_{\tau_i} \mat_0(Z_{\tau_i}))
		= \ten_0(\mat_0(Z_{\tau_i} \times_0 \mathbf{R}_{\tau_i} ))
		\\
		& 
		= Z_{\tau_i} \times_0 \mathbf{R}_{\tau_i} \ .  
\end{align*}
\bcl
This implies that $\pi^\dagger_{\tau_i}(Z_\tau)$ differs from $Z_{\tau_i}$ only in that the (small) core tensor $G_{\tau_i}$ of $Z_{\tau_i}$ is replaced by 
$\widehat G_{\tau_i}= G_{\tau_i} \times_0  \mathbf{R}_{\tau_i} $, or equivalently,
$\mat_0(\widehat G_{\tau_i})= \mathbf{R}_{\tau_i} \mat_0(G_{\tau_i})$.
\ecl

\bcl
\subsection{Computational complexity}
In the implementation of Algorithm~\ref{alg:Phi-tau-i} and Algorithm~\ref{alg:Psi-tau}, the matrices $\U_{\tau_i}=\mat_0(X_{\tau_i})^T$  (with superscripts $0$ and $1$), the tensors $Y_{\tau_i}$  and the values of the function $F_{\tau_i}$ are not stored and computed entrywise but only through their TTN factorization into basis matrices for the leaves and connection tensors. Furthermore, products of prohibitively large matrices such as $ \U_{\tau_i}^{1,\top}\mat_0\bigl(F_{\tau_i}(t,X_{\tau_i}^1\times_0 {\S}_{\tau_i}^\top) \bigr)^\top$ are reduced to matrix products of small matrices using \eqref{UW} recursively.
A count of the required operations and the required memory yields the following result. 

\begin{lemma}\label{lem:complexity}
Let $d$ be the order of the tensor $A(t)$ (i.e., the number of leaves of the tree $\bar\tau$), $l<d$ the number of levels (i.e., the height of the tree $\bar\tau$), and let $n=\max_\ell n_\ell$ be the maximal dimension, $r=\max_\tau r_\tau$ the maximal rank and $m$ the maximal order of the connection tensors. We assume that for every tree tensor network $X_{\bar\tau}$ we have that $Z_{\bar\tau}=F(t,X_{\bar\tau})$ is again a tree tensor network, with ranks $s_\tau \le cr$ for all subtrees $\tau\le\bar\tau$ with a moderate constant $c$. Then, one time step of the tree tensor integrator given by Algorithms~\ref{alg:ttn}--\ref{alg:Psi-tau} can be implemented such that it requires
\begin{itemize}
\item $O(dr(n+r^{m}))$ storage,
\item $O(ld)$ tensorizations/matricizations of matrices/tensors with $\le r^{m+1}$ entries,
\item $O(ld^2r^2(n+ r^{m}))$ arithmetical operations and
\item $O(d)$ evaluations of the function $F$,
\end{itemize}
provided the differential equations in Algorithms~\ref{alg:Phi-tau-i} and~\ref{alg:Psi-tau} are solved approximately using a fixed number of function evaluations per time step.
\end{lemma}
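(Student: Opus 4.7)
The plan is to verify the four bullet points in turn, relying on the recursive structure of Algorithms~\ref{alg:ttn}--\ref{alg:Psi-tau} together with the efficient evaluation of products $\U_\tau^\top\W_\tau$ via~\eqref{UW}.

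The storage bound is the easiest: I would simply count the $d$ basis matrices $\U_\ell\in\R^{n_\ell\times r_\ell}$ (at most $nr$ entries each) together with the fewer than $d$ connection tensors $C_\tau\in\R^{r_\tau\times r_{\tau_1}\times\dots\times r_{\tau_m}}$ (at most $r^{m+1}$ entries each), giving $O(dnr+dr^{m+1})=O(dr(n+r^m))$. All auxiliary quantities constructed during one time step (the factors $\Q_{\tau_i}^0$, $\widehat{\mathbf Q}_{\tau_i}^1$, $\S_{\tau_i}^0$, $\widetilde\S_{\tau_i}^0$, and intermediate connection tensors) fit within the same order. Counting evaluations of $F$ is similarly direct: one call at the root visits each vertex of $\bar\tau$ exactly once, a non-leaf $\tau=(\tau_1,\dots,\tau_m)$ executing $m_\tau$ subflows $\Phi_\tau^{(i)}$ followed by one $\Psi_\tau$. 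Summing $m_\tau+1$ over the non-leaves gives $O(d)$ ODEs to be advanced; under the stated hypothesis of a fixed number of right-hand-side evaluations per ODE step, this yields $O(d)$ evaluations of the various $F_\tau$, each of which reduces by Definition~\ref{def:F-tau} to a single evaluation of the outer~$F$.

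For the count of small tensor operations I would use that, by Definition~\ref{def:F-tau}, every $F_\tau$ is the composition of prolongations $\pi_{\sigma,i}$ and restrictions $\pi_{\sigma,i}^\dagger$ along the unique path from $\tau$ to $\bar\tau$, which has length at most $l$. The explicit formulas for $\pi_{\tau,i}$ and $\pi_{\tau,i}^\dagger$ derived in Subsection~\ref{subsec:phi-tau-dagger} show that each such $\pi_{\sigma,i}$ and $\pi_{\sigma,i}^\dagger$ is realized by $O(1)$ tensorizations/matricizations acting on tensors with at most $r^{m+1}$ entries; combined with the constant number of such operations incurred by the QR decompositions and connection-tensor updates at every vertex, this yields $O(l)$ small tensor operations per subflow and hence $O(ld)$ in total.

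The arithmetical count follows by assigning costs to the operations already counted. Each small tensorization costs $O(r^{m+1})$; each QR decomposition of a matrix of size at most $r^m\times r$ costs $O(r^{m+2})$; the leaf ODEs act on $n\times r$ matrices with cost $O(nr^2)$ per step; and evaluating $F_\tau$ or its restriction requires, via~\eqref{UW}, computing products $\U_\sigma^\top\W_\sigma$ recursively over $O(d)$ vertices, each product costing $O(r^2(n+r^m))$ (with $n$ appearing only at the leaves). Multiplying by the $O(d)$ function evaluations and by the $O(l)$ depth of each restriction/prolongation chain produces the claimed bound $O(ld^2r^2(n+r^m))$. The main obstacle I foresee lies precisely here: the arithmetic must be bounded without ever forming the ambient tensor in $\R^{n_1\times\dots\times n_d}$, so every $F_\tau$ evaluation has to be pushed through the recursive formula~\eqref{UW}, and one must verify that the $\widehat F_{\tau_i}$ evaluations inside the $\S$-ODEs reuse rather than recompute the expensive prolongation data from the outer step. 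A sloppy accounting would easily hide a spurious factor of $n^m$ or lose the factor $l$, so a careful walk through the data flow of Algorithms~\ref{alg:Phi-tau-i}--\ref{alg:Psi-tau} is needed to close the proof.
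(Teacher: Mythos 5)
Your proposal is correct and follows essentially the same route as the paper's proof: the storage, tensorization and $F$-evaluation counts are read off directly from the tree structure, and the only substantive work is the arithmetic count, which the paper — exactly as you plan — reduces to QR decompositions of $r^m\times r$ matrices and to the recursive evaluation of products $\U_\sigma^\top\W_\sigma$ via~\eqref{UW}, accumulating $O(d\,(nr^2+r^{m+2}))$ per restriction chain and $O(ld^2(nr^2+r^{m+2}))$ in total. The obstacle you flag at the end (avoiding the ambient tensor and pushing every $F_\tau$ and $\widehat\F_{\tau_i}$ evaluation through~\eqref{UW}) is precisely how the paper closes the argument, so your plan is complete in substance.
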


\begin{proof} The only nontrivial count regards the arithmetical operations. In Algorithm~\ref{alg:Phi-tau-i}, there are two QR decompositions of $r^m\times r$ matrices, which requires $O(r^m\cdot r^2)=O(r^{m+2})$ operations. In total over all subtrees of $\bar\tau$,  the total computational cost for the QR decompositions is thus $O(dr^{m+2})$.

The computation of $X_{\tau_i}^0 \times_0 \S_{\tau_i}^{0,\top}$ in factorized form requires only computing the smaller product
$C_{\tau_i}^0 \times_0 \S_{\tau_i}^{0,\top}$, where $C_{\tau_i}^0$ is the core tensor of $X_{\tau_i}^0$, as at the end of the previous subsection. The computational cost for this product is $O(r^{m+2})$ operations, and in total over all subtrees of $\bar\tau$,  the total computational cost is thus $O(dr^{m+2})$ operations.

The matrix product $ \U_{\tau_i}^{1,\top}\mat_0\bigl(F_{\tau_i}(t,X_{\tau_i}^1\times_0 {\S}_{\tau_i}^\top) \bigr)^\top$ is computed recursively using \eqref{UW}. This requires $O(|L(\tau_i)| (n r^2 +  r^{m+2}))$ operations, where $|L(\tau_i)|$ is the number of leaves of $\tau_i$. In total over all subtrees of $\bar\tau$, the computational cost for these products is therefore $O(ld (n r^2 +  r^{m+2}))$ operations.

To evaluate the function $F_{\tau_i}$, we need prolongations (which do not require any arithmetical operations) and restrictions. The computational cost for computing a restriction $\pi_{\tau,i}^\dagger$ in the way described in the previous subsection, is $O(|L(\tau_i)| (n r^2 +  r^{m+2}))$ operations for the recursive computation of the matrix $\mathbf{R}_{\tau_i}$ via \eqref{UW}. The mode-0 multiplication of
$\mathbf{R}_{\tau_i}$ with the core tensor requires another $O(r^{m+2})$ operations. One evaluation of $F_{\tau_i}$ requires several restrictions from the root down to $\tau_i$ and then costs
$O(d (n r^2 +  r^{m+2}))$ operations in addition to the evaluation of $F$. In total over all subtrees of $\bar\tau$,   the computational cost then becomes $O( l d^2  (n r^2 +  r^{m+2}))$ operations.

In Algorithm~\ref{alg:Psi-tau}, we note that the tree tensor network $Z_\tau = F_\tau(t,Y_\tau)$, when written in factorized form as
$
Z_\tau=G_\tau \times_0 \I_\tau \bigtimes_{i=1}^m \W_{\tau_i}
$
for a tree $\tau=(\tau_1,\dots\tau_m)$, has
$$
Z_\tau \bigtimes_{i=1}^m \U_{\tau_i}^\top = G_\tau \times_0 \I_\tau \bigtimes_{i=1}^m \U_{\tau_i}^\top \W_{\tau_i}.
$$
The products $\U_{\tau_i}^\top \W_{\tau_i}$ are computed recursively via \eqref{UW} in $O( |L(\tau)|  (n r^2 +  r^{m+2}))$ operations.
In total over all subtrees of $\bar\tau$,   the computational cost then again becomes $O( d l  (n r^2 +  r^{m+2}))$ operations.
\end{proof}
\ecl
 
\section{Exactness property of the TTN integrator}
\label{sec:exact}

We will show that under a non-degeneracy condition, the TTN integrator with the tree rank $(r_\tau)$ reproduces time-dependent tree tensor networks $A(t)$ with the same tree rank {\it exactly} at every time step when the integrator is applied with $F(t,Y)=\dot A(t)$ and exact initial value $Y^0=A(t_0)$. Such an exactness result is already known for the special cases of projector-splitting integrators for low-rank matrices \cite{LuO14}, tensor trains / matrix product states \cite{LuOV15}, and Tucker tensors \cite{LuVW18}. The latter result will now be used in a recursive way to prove the exactness property of the TTN integrator.

We first formulate the non-degeneracy condition. 
Consider a time-dependent family of tree tensor networks $A(t)$ of full tree rank $(r_\tau)_{\tau\le\bar\tau}$,
and set $Y_{\bar\tau}^0 = A(t_0)$, for which we consider the restricted tensor networks $A_\tau(t):=\bigl(A(t)\bigr)_\tau$ defined by the restrictions \eqref{rest} associated with $Y_{\bar\tau}^0$  for the subtrees $\tau\le\bar\tau$. By Lemma~\ref{lem:rest-rank}, we then have for every subtree $\tau\le\bar\tau$ that
\begin{equation}\label{nondeg-0}
\text{$A_\tau(t_0)$ has full tree rank $(r_\sigma)_{\sigma\le\tau}$ for every subtree $\tau \le \bar\tau$.}
\end{equation}
We impose the condition that the same full-rank property still holds at $t_1>t_0$:
\begin{equation}\label{nondeg}
\text{$A_\tau(t_1)$ has full tree rank $(r_\sigma)_{\sigma\le\tau}$ for every subtree $\tau \le \bar\tau$.}
\end{equation}

\begin{theorem}[Exactness]
\label{thm:exact}
Let $A(t)$ be a continuously differentiable time-depen\-dent family of tree tensor networks $A(t)$ of full tree rank $(r_\tau)_{\tau\le\bar\tau}$  for ${t_0\le t \le t_1}$,
and suppose that the non-degeneracy condition \eqref{nondeg} is satisfied. Then the recursive TTN integrator used with the same tree rank $(r_\tau)_{\tau\in T(\bar\tau)}$ for $F(t,Y)=\dot A(t)$ is exact: starting from $Y^0 = A(t_0)$ we obtain  $Y^1 = A(t_1)$ .
\end{theorem}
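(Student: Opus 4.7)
The proof will proceed by induction on the height of the tree $\bar\tau$. The base case (height $1$, where $\bar\tau$ has only leaves as direct subtrees) reduces to the exactness property of the extended Tucker integrator, which in turn follows from the Tucker exactness theorem of \cite{LuVW18} applied slicewise. The inductive step requires showing that one call of the recursive TTN integrator on $\bar\tau$ produces $A(t_1)$ exactly, assuming the same statement already holds for all subtrees of strictly smaller height.

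The key preparatory observation is that since $F(t,Y)=\dot A(t)$ is independent of $Y$ and each restriction $\pi_{\tau,i}^\dagger$ is linear, Definition~\ref{def:F-tau} yields $F_\tau(t,Y)=\dot A_\tau(t)$ with $A_\tau(t)=\pi_{\tau,i}^\dagger(A_{\text{parent}}(t))$, again independent of $Y$ (commuting the linear restriction with the time derivative). Moreover, with the initial choice $Y_{\bar\tau}^0=A(t_0)$, the recursive definition gives $Y_{\tau}^0=A_\tau(t_0)$ at every subtree, and Lemma~\ref{lem:rest-rank} together with \eqref{nondeg-0} and \eqref{nondeg} guarantees that both $A_\tau(t_0)$ and $A_\tau(t_1)$ lie in the manifold $\mathcal{M}_\tau$. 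Hence the induction hypothesis is applicable at every recursive call.

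For the inductive step, I would identify the root-level algorithm as an instance of the extended Tucker integrator of Section~\ref{sec:eti} applied to $Y_{\bar\tau}^0 = C_{\bar\tau}^0 \times_0 \I_{\bar\tau} \bigtimes_{i=1}^m \U_{\tau_i}^0$ with right-hand side $F=\dot A(t)$. The only difference is in the step of each subflow $\Phi_{\bar\tau}^{(i)}$ where the matrix differential equation $\dot\K_{\tau_i}(t)=\F_{\tau_i}(t,\K_{\tau_i}(t))$ of Algorithm~\ref{alg:Phi-i} is replaced (for non-leaf $\tau_i$) by a recursive call that solves the tensorized version $\dot Y_{\tau_i}(t)=F_{\tau_i}(t,Y_{\tau_i}(t))$ with $Y_{\tau_i}(t_0)=\pi_{\tau,i}^\dagger(Y_\tau^0)$. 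By the induction hypothesis the recursive call returns $Y_{\tau_i}^1 = A_{\tau_i}(t_1)$, whose matricization satisfies $\mat_0(Y_{\tau_i}^1)^\top = \mat_i(A(t_1))\V_{\tau_i}^0$, which is precisely the analytical solution $\K_{\tau_i}(t_1)$ of the matrix ODE (a direct check using linearity and the definition of the restriction). Thus every $\Phi_{\bar\tau}^{(i)}$ in the TTN integrator delivers the same $\K_{\tau_i}^1$ as the extended Tucker integrator would; the subsequent QR decomposition of $\mat_0(C_{\tau_i}^1)^\top$ is equivalent to the QR decomposition of the full $\K_{\tau_i}^1$ because the Kronecker product of orthonormal matrices is again orthonormal.

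With this identification in place, applying the exactness result for the extended Tucker integrator (Section~\ref{sec:eti}, inherited from \cite{LuVW18}) to the extended Tucker tensor at the root immediately yields $Y^1=A(t_1)$, completing the induction. The principal technical obstacle I foresee is the bookkeeping that shows each recursive substep truly corresponds, via the prolongation/restriction identities of Lemma~\ref{lem:prol-rest} and Lemmas~\ref{lem:rest-rank}--\ref{lem:prol-rank}, to the matrix-level substep of the extended Tucker integrator; in particular, verifying that the backward $\S$-equation in Algorithm~\ref{alg:Phi-tau-i} (where $\widehat\F_{\tau_i}$ is also $\S$-independent because $F_{\tau_i}$ is state-independent) undoes exactly the $\widehat Q_{\tau_i}^1$-factor so that the reconstituted tensor $C_\tau^1$ together with the updated basis matrices reproduces $A(t_1)$. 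The non-degeneracy condition \eqref{nondeg} is used precisely to guarantee the well-posedness of these QR steps across all levels.
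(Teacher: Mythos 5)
Your proposal is correct and follows essentially the same route as the paper's proof: induction on the height of the tree, observing that $F_\tau(t,Y)=\dot A_\tau(t)$ because the (time-independent, linear) restrictions commute with time differentiation, invoking \eqref{nondeg-0} and \eqref{nondeg} to apply the Tucker exactness theorem of \cite{LuVW18} at each level, and using the induction hypothesis to conclude that the recursive calls solve the subtree differential equations exactly, so that the TTN integrator reduces level by level to the (extended) Tucker integrator. The additional bookkeeping you flag (identifying $\mat_0(Y_{\tau_i}^1)^\top$ with the exact $\K_{\tau_i}(t_1)$ and the QR equivalence via Kronecker products of orthonormal matrices) is consistent with the correspondences the paper establishes in Sections 4.2--4.3 and does not change the argument.
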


\begin{proof} The result is obtained from the exactness result of the Tucker integrator that was proved in \cite{LuVW18} and an induction argument over the height of the trees. The height is defined in a formal way as follows:
	\begin{enumerate}[(i)]
		\item
			If $\tau = \ell \in \mathcal{L}$, then we set $ h(\tau) = 0 $; i.e., leaves have height 0.
		\item
			If $\tau = ( \tau_1, \dots, \tau_m) \in \mathcal{T} $, then 
			we set $ h(\tau) = 1 + \max \{ h(\tau_1), \dots, h(\tau_m) \} $.
	\end{enumerate}
We note that, since the restricitions $\pi_{\tau}^\dagger$ for $\tau\le \bar \tau$ do not depend on time $t$, time differentiation commutes with these linear maps and we have
$$
{\dot A}_\tau (t) := \frac{d}{d t} A_\tau(t) = \bigl( \dot A(t) \bigr)_\tau.
$$

(i) Consider first trees $\tau = ( \tau_1, \dots, \tau_m)$ of height 1. The tree tensor network $A_\tau(t)$ is then a Tucker tensor, which by \eqref{nondeg-0} and \eqref{nondeg} has  full multilinear rank 
$(r_\tau,r_{\tau_1},\dots,r_{\tau_m})$ at both $t=t_0$ and $t=t_1$. The TTN integrator with $F_\tau(t,Y_\tau)=\dot A_\tau(t)$ is in this case the same as the Tucker integrator of \cite{LuVW18} and hence reproduces $A_\tau(t_1)$ exactly by \cite[Theorem~4.1]{LuVW18}. 
\bcl We note that condition \eqref{nondeg} for the leaves $\tau_i$ corresponds to the invertibility condition in \cite[Theorem~4.1]{LuVW18}.
\ecl

(ii) For trees of height $k\ge 2$ we work with the induction hypothesis that the recursive TTN integrator with $F_\tau(t,Y_\tau)=\dot A_\tau(t)$ is exact for all trees $\tau < \bar\tau$ of height strictly smaller than $k$. For a tree $\tau=(\tau_1,\dots,\tau_m)$ of height $k$ the TTN integrator is therefore exact for the subtrees $\tau_i$, and hence the recursive steps in the TTN integrator are solved exactly. This reduces the recursive TTN integrator to the Tucker integrator for $F_\tau(t,Y_\tau)=\dot A_\tau(t)$, where by \eqref{nondeg-0} and \eqref{nondeg}, the tensor $A_\tau(t)$, viewed as a Tucker tensor $A_\tau(t)=C_\tau(t)\bigtimes_{i=1}^m \U_{\tau_i}(t)$, has  full multilinear rank $(r_\tau,r_{\tau_1},\dots,r_{\tau_m})$ at both $t=t_0$ and $t=t_1$. From the exactness result of \cite[Theorem~4.1]{LuVW18} it then follows that the integrator reproduces $A_\tau(t_1)$ exactly. This completes the induction argument. Finally, we thus obtain the exactness result for the maximal tree $\bar \tau$, which is the stated result.
\end{proof}

\section{Error bound}
\label{sec:err}

We derive an error bound for the integrator that is independent of singular values of matricizations of the connecting tensors, based on the corresponding result for Tucker tensors proved in \cite{LuVW18}, which in turn was based on the corresponding result for matrices proved in \cite{KiLW16}. We recall the notation $\mathcal{V}_\tau$ for the tensor space \eqref{V-tau} 
and $\M_\tau$ for the tree tensor network manifold \eqref{M-tau}. We set $\mathcal{V}=\mathcal{V}_{\bar\tau}$ and
$\mathcal{M}=\mathcal{M}_{\bar\tau}$ for the full tree $\bar\tau$.

We assume that $F:[0,t^*]\times \M \to \mathcal{V}$ is Lipschitz continuous and bounded,
\begin{align}
\label{L-bound}
 & \|F(t,Y) - F(t,\widetilde{Y})\| \le L\|Y-\widetilde{Y}\| & \text{ for all}\ Y,\widetilde{Y} \in \M,  
 \\[1mm]
 \label{B-bound}
 & \| F(t,Y) \| \le B & \text{ for all}\ Y \in \M. 
\end{align}\label{eq:F-bound}
Here and in the following, the chosen norm $\|\cdot\|$ is the tensor Euclidean norm. As usual in the numerical analysis of ordinary differential equations, this could be weakened to a local Lipschitz condition and local bound in a neighborhood of 
the exact solution $A(t)$ of the tensor differential equation \eqref{eq:fullEq} to the initial data $A(t_0)=A^0\in\mathcal{V}$, but for convenience we will work with the global Lipschitz condition and bound.

We further assume that $F(t,Y)$ is in the tangent space $\mathcal{T}_Y\M$ up to a small remainder: with $P(Y)$ denoting the orthogonal
projection onto $\mathcal{T}_Y\M$, we assume that for some $\eps>0$,
\begin{equation}\label{eps}
\| F(t,Y) - P(Y) F(t,Y) \| \le \eps 
\end{equation}
for all $(t,Y)\in [0,t^*]\times \M$ \bcl in some ball $\| Y \| \le \rho$, where it is assumed that the exact solution $A(t)$, $0\le t \le t^*$, has a bound that is strictly smaller than $\rho$.\ecl

Finally, we assume that the initial value $A^0$ and the starting value $Y^0\in\M$ of the numerical method are $\delta$-close:
\begin{equation} \label{init-err}
  \| Y^0-A^0 \| \le \delta.
\end{equation}

\begin{theorem}[Error bound]
  \label{thm:error}
  Under the above assumptions, the error of the numerical approximation $Y^n$ at $t_n=nh$, obtained with $n$ time steps of the TTN integrator with step size $h>0$, is bounded by
  \[
    \|Y^n - A(t_n)\| \le c_0\delta + c_1 \eps + c_2 h \qquad\text{for }\ t_n \le t^*,
  \]
where $c_i$ depend only on $L$, $B$, $t^*$, and the tree $\bar\tau$. \bcl This holds true provided that $\delta,\eps$ and $h$ are so small that the above error bound guarantees that  $ \|Y^n\| \le \rho$.\ecl
\end{theorem}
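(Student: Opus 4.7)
The plan is to mimic the proof strategies for the matrix case in \cite{KiLW16} and for Tucker tensors in \cite{LuVW18}, proceeding by induction on the height of the tree $\bar\tau$. The base case $h(\bar\tau)=1$ reduces the TTN manifold to a Tucker manifold and the TTN integrator to the Tucker integrator; the stated error bound is then precisely the one proved in \cite{LuVW18}. For the inductive step, assume the result holds for all trees of height less than $k$, and let $\bar\tau=(\tau_1,\dots,\tau_m)$ have height $k$.

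First I would establish a \emph{local error bound}: starting from an arbitrary $Y^0\in\M$, one step of the TTN integrator $\Phi_h$ satisfies $\|\Phi_h(Y^0)-A_{Y^0}(t_0+h)\| \le C(\eps h + h^2)$, where $A_{Y^0}(t)$ denotes the exact solution of \eqref{eq:fullEq} with initial value $Y^0$. The local error is split into two contributions. The first is the error of an \emph{idealized} integrator $\widetilde\Phi_h$ in which the subproblems on the subtrees $\tau_i$ are solved exactly; at the current level this idealized scheme coincides with the extended Tucker integrator of Section \ref{sec:eti} applied to $Y_{\bar\tau}$, so its local error is controlled by the Tucker analysis of \cite{LuVW18}, yielding $O(\eps h + h^2)$. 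Here the exactness property (Theorem \ref{thm:exact}) is the decisive tool that eliminates any dependence on small singular values of the matricizations $\mat_i(C_\tau)$, exactly as the exactness of the matrix projector-splitting integrator does in \cite{KiLW16}. The second contribution is the error introduced by replacing exact subproblem solutions with the recursive TTN integrator on each $\tau_i$, which I bound by the induction hypothesis applied with the restricted data $F_{\tau_i}=\pi_{\tau,i}^\dagger\circ F_{\bar\tau}\circ \pi_{\tau,i}$ and $Y_{\tau_i}^0=\pi_{\tau,i}^\dagger(Y^0)$ from Section \ref{subsec:F-tau}.

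Before invoking the induction hypothesis I need to verify that the restricted functions $F_{\tau_i}$ inherit the hypotheses of the theorem with constants controlled by those of $F$. By Lemma \ref{lem:prol-rest} the prolongation is a linear isometry and the restriction is its adjoint with operator norm $\le 1$, so the Lipschitz constant \eqref{L-bound} and the uniform bound \eqref{B-bound} transfer verbatim. Lemma \ref{lem:prol-rank} ensures that $\pi_{\tau,i}$ maps $\M_{\tau_i}$ into $\M_{\bar\tau}$, so $F_{\tau_i}$ is defined on the subtree manifold, and since $\pi_{\tau,i}$ sends tangent vectors of $\M_{\tau_i}$ into tangent vectors of $\M_{\bar\tau}$ isometrically while $\pi_{\tau,i}^\dagger$ is a contraction, the tangent-space closeness condition \eqref{eps} is inherited on the subtree with an $\eps$ of the same order. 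Thus the induction hypothesis delivers a local subtree error of $O(\eps h + h^2)$, and summing the $m$ subtree contributions with the leading-level idealized error gives the required local bound with a constant depending only on $L$, $B$, and on combinatorial data of $\bar\tau$.

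Finally, combining the local error bound with a stability estimate of the form $\|\Phi_h(Y)-\Phi_h(\widetilde Y)\|\le (1+Lh)\|Y-\widetilde Y\|+O(h^2)$, deduced again by the same induction together with the Lipschitz bound on $F$ and the norm-non-increasing property of restrictions, the standard Lady Windermere's fan argument accumulates the local error over $n=t_n/h$ steps to yield the asserted bound $\|Y^n-A(t_n)\|\le c_0\delta+c_1\eps+c_2 h$. The smallness condition on $\delta,\eps,h$ is used exactly to keep all intermediate iterates inside the ball $\|Y\|\le\rho$ on which the hypotheses hold. The main obstacle is controlling how the error constants propagate from one tree level to the next so that the final constants $c_i$ depend only on the tree $\bar\tau$ (through its height, its maximal arity $m$, and the number of leaves) but not on the conditioning of any matricization $\mat_i(C_\tau)$; this robustness is the entire raison d'être of the construction, and it is secured by invoking the exactness property at every level of the recursion rather than by any spectral assumption on the factors.
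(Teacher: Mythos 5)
Your proposal follows essentially the same route as the paper: induction on the height of the tree, with the base case handled by the Tucker error bound of \cite{LuVW18}, the inductive step obtained by comparing the idealized integrator (exact subflows, i.e.\ the extended Tucker integrator) with the actual recursive one whose subproblems are controlled by the induction hypothesis, transfer of the hypotheses \eqref{L-bound}--\eqref{eps} to the restricted functions $F_\tau$, and Lady Windermere's fan to pass from local to global error. The one place where your argument as written does not quite close is the inheritance of the tangent-space defect bound \eqref{eps}: you argue from the fact that the prolongation $\pi_{\tau,i}$ maps tangent vectors of $\M_{\tau_i}$ into tangent vectors of $\M_{\tau}$, but what is actually needed is the opposite (and less obvious) direction, namely that the \emph{restriction} $\pi_{\tau,i}^\dagger$ carries the tangential part $P(\pi_{\tau,i}(Y_{\tau_i}))F(\cdot)$ at the prolonged point into the tangent space $T_{Y_{\tau_i}}\M_{\tau_i}$ at the original point; only then does $(I-P_{\tau_i})F_{\tau_i}$ reduce to the restriction of the $\eps$-small normal remainder. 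This is exactly the content of Lemma~\ref{lem:M-tau} in the paper, proved via a differentiable path on $\M_\tau$ and the left-inverse property of Lemma~\ref{lem:prol-rest}; with that lemma supplied, your sketch matches the paper's proof.
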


\bcl We remark that the error bound is still valid for sufficiently small $\delta,\eps$ and $h$ if the bound \eqref{eps} is satisfied only in some tubular neighborhood 
$\{ (t,Y) \in [0,t^*]\times \M \,:\, \| Y-A(t) \| \le \vartheta\}$ for an arbitrary fixed $\vartheta>0$. We do not include the proof of this more general result, because the higher technical intricacies would obscure the basic argument of the proof.
\ecl

The proof of Theorem~\ref{thm:error} works recursively, based on the corresponding result for Tucker tensors given in \cite{LuVW18} and using a similar induction argument to the proof of Theorem~\ref{thm:exact}. To make this feasible, we need that the conditions on $F=F_{\bar\tau}$ are also satisfied  for the reduced functions $F_\tau$ for every subtree $\tau\le \bar\tau$, 
which are constructed recursively in Definition~\ref{def:F-tau}. For $Y_\tau\in\M_\tau$, 
let $P_\tau(Y_\tau)$ be the orthogonal projection onto the tangent space $T_{Y_\tau}\M_\tau$.
We have the following remarkable property.

\begin{lemma} 
\label{lem:F-tau}
If $F_{\bar\tau}=F$ satisfies conditions \eqref{L-bound}--\eqref{eps}, then we have for every subtree $\tau\le\bar\tau$, with the same $L$, $B$, and $\eps$,
\begin{align}
\label{L-bound-tau}
 & \|F_\tau(t,Y_\tau) - F_\tau(t,\widetilde{Y_\tau})\| \le L\|Y_\tau-\widetilde{Y_\tau}\| & \text{ for all}\ Y_\tau,\widetilde{Y_\tau} \in \M_\tau,  
 \\[1mm]
 \label{B-bound-tau}
 & \| F_\tau(t,Y_\tau) \| \le B & \text{ for all}\ Y_\tau \in \M_\tau 
\end{align}
and 
\begin{equation}\label{eps-tau}
\| F_\tau(t,Y_\tau) - P_\tau(Y_\tau) F_\tau(t,Y_\tau) \| \le \eps 
\end{equation}
for all $(t,Y_\tau)\in [0,t^*]\times \M_\tau$ \bcl with $\| Y_\tau \| \le \rho$.\ecl
\end{lemma}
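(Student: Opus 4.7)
My plan is to do induction on the depth of the subtree $\tau\le\bar\tau$, descending one level at a time. The base case $\tau=\bar\tau$ is the hypothesis on $F=F_{\bar\tau}$. For the inductive step, assuming the three bounds hold for $F_\tau$ with constants $L,B,\eps$, I show the same bounds hold for $F_{\tau_i}$ for every $i=1,\dots,m$ of a tree $\tau=(\tau_1,\dots,\tau_m)$. This uses only the recursive identity $F_{\tau_i}=\pi_{\tau,i}^\dagger\circ F_\tau\circ\pi_{\tau,i}$ from Definition~\ref{def:F-tau} combined with the prolongation/restriction properties collected in Lemma~\ref{lem:prol-rest} and Lemma~\ref{lem:prol-rank}.

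For Lipschitz continuity, Lemma~\ref{lem:prol-rank} sends $Y_{\tau_i},\widetilde Y_{\tau_i}\in\M_{\tau_i}$ into $\M_\tau$, so the inductive Lipschitz bound for $F_\tau$ applies on the prolonged arguments; combining the norm-contractivity $\|\pi_{\tau,i}^\dagger(\cdot)\|\le\|\cdot\|$ with the isometry $\|\pi_{\tau,i}(\cdot)\|=\|\cdot\|$ from Lemma~\ref{lem:prol-rest} yields the bound with the same $L$. Boundedness by $B$ is entirely analogous.

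The tangent-projection bound is the critical point. Setting $W:=\pi_{\tau,i}(Y_{\tau_i})\in\M_\tau$, the inductive hypothesis gives $F_\tau(t,W)=P_\tau(W)F_\tau(t,W)+R$ with $\|R\|\le\eps$. Applying $\pi_{\tau,i}^\dagger$ yields
\[
F_{\tau_i}(t,Y_{\tau_i})=\pi_{\tau,i}^\dagger\bigl(P_\tau(W)F_\tau(t,W)\bigr)+\pi_{\tau,i}^\dagger R.
\]
If I can show that $V:=\pi_{\tau,i}^\dagger(P_\tau(W)F_\tau(t,W))$ lies in $T_{Y_{\tau_i}}\M_{\tau_i}$, then the variational characterization of the nearest-point projection, together with the norm-contractivity of $\pi_{\tau,i}^\dagger$, finishes the argument:
\[
\|F_{\tau_i}(t,Y_{\tau_i})-P_{\tau_i}(Y_{\tau_i})F_{\tau_i}(t,Y_{\tau_i})\|\le\|F_{\tau_i}(t,Y_{\tau_i})-V\|=\|\pi_{\tau,i}^\dagger R\|\le\eps.
\]

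The main obstacle is therefore the structural inclusion $\pi_{\tau,i}^\dagger(T_W\M_\tau)\subseteq T_{Y_{\tau_i}}\M_{\tau_i}$. I would prove this from the orthogonal factor decomposition of the TTN tangent space at $W$ under the standard orthonormality gauge: $T_W\M_\tau$ splits as the orthogonal direct sum of subspaces of variations of each factor of $W$. The summand corresponding to variations of the $\tau_i$-branch is precisely $\pi_{\tau,i}(T_{Y_{\tau_i}}\M_{\tau_i})$, because $\pi_{\tau,i}$ is the differential at $Y_{\tau_i}$ of the linear isometric embedding $\M_{\tau_i}\hookrightarrow\M_\tau$ that freezes the other factors of $W$; by the left-inverse property in Lemma~\ref{lem:prol-rest}, $\pi_{\tau,i}^\dagger$ returns this piece to $T_{Y_{\tau_i}}\M_{\tau_i}$. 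The remaining summands, corresponding to gauged variations of the frozen factors $\U_{\tau_j}^0$ ($j\ne i$) and of the root connection tensor $\text{Ten}_i(\Q_{\tau_i}^{0,\top})$, are orthogonal to the entire linear subspace $\operatorname{image}(\pi_{\tau,i})$ because of the orthonormality of the frozen basis matrices and of $\Q_{\tau_i}^0$. The adjoint relation \eqref{phi-adjoint} then forces $\pi_{\tau,i}^\dagger$ to vanish on $\operatorname{image}(\pi_{\tau,i})^\perp$, so those summands contribute zero to $\pi_{\tau,i}^\dagger G$ for any $G\in T_W\M_\tau$, and the structural inclusion follows.
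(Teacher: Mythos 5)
Your proposal is correct and, for the Lipschitz and norm bounds, identical to the paper's one-line argument (prolongation is an isometry, restriction is a contraction, and Lemma~\ref{lem:prol-rank} keeps the prolonged arguments in $\M_\tau$). For the bound \eqref{eps-tau} the overall strategy also coincides — everything reduces to showing that $\pi_{\tau,i}^\dagger$ maps $T_{W}\M_\tau$ into $T_{Y_{\tau_i}}\M_{\tau_i}$ for $W=\pi_{\tau,i}(Y_{\tau_i})$, after which the nearest-point property of $P_{\tau_i}$ and contractivity of $\pi_{\tau,i}^\dagger$ give the $\eps$-bound exactly as in the paper — but you establish that key inclusion by a genuinely different route. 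The paper (Lemma~\ref{lem:M-tau}) takes a smooth path $X_\tau(\theta)\in\M_\tau$ through $W$ with velocity the given tangent vector, restricts it, invokes local continuity of the tree rank to keep $\pi_{\tau,i}^\dagger X_\tau(\theta)$ in $\M_{\tau_i}$, and differentiates; this is short and needs no explicit description of the tangent space. You instead split $T_W\M_\tau$ into $\pi_{\tau,i}(T_{Y_{\tau_i}}\M_{\tau_i})$ plus a complement orthogonal to $\operatorname{image}(\pi_{\tau,i})$, and use that the adjoint $\pi_{\tau,i}^\dagger$ annihilates $\operatorname{image}(\pi_{\tau,i})^\perp$. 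This is sound and more informative (it shows $\pi_{\tau,i}^\dagger$ acts on $T_W\M_\tau$ as the orthogonal projection onto the branch summand followed by the inverse isometry), but it is also where all the work hides: the orthogonality of the complementary summand holds only after the correct gauge is imposed, and the ungauged parts of the variations of the frozen $\U_{\tau_j}^0$ and of the core $\ten_i(\Q_{\tau_i}^{0,\top})$ (e.g.\ $\delta\U_{\tau_j}=\U_{\tau_j}^0\mathbf{B}$, or $\delta C$ with $\mat_i(\delta C)\Q_{\tau_i}^0\ne 0$) are neither in $\operatorname{image}(\pi_{\tau,i})$ nor orthogonal to it — they must first be re-absorbed into the $\tau_i$-branch summand, using the invertibility of $\S_{\tau_i}^0$ (i.e.\ Lemma~\ref{lem:rest-rank}) to see that mode-$0$ multiplications of $Y_{\tau_i}$ are indeed tangent to $\M_{\tau_i}$. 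A minor further difference: your induction re-decomposes $F_\tau=P_\tau F_\tau+R$ afresh at each level, whereas the paper carries the single top-level splitting $F=M+R$ down the tree; both bookkeepings are equivalent.
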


The proof of the $\eps$-bound \eqref{eps-tau} is based on the following lemma.

\begin{lemma}\label{tangential map}
\label{lem:M-tau}
Let $\tau=(\tau_1,\dots,\tau_m)\in\mathcal{T}$ and $i=1,\dots,m$. Let $M_\tau:\mathcal{M}_\tau \to \mathcal{V}_\tau$ be such that it maps into the tangent space:
$$
M_\tau(Y_\tau) \in T_{Y_\tau} \mathcal{M}_\tau  \quad\text{for all } \ Y_\tau\in  \mathcal{M}_\tau.
$$
Let $\pi_{\tau,i}^\dagger$ and $\pi_{\tau,i}$ be the restrictions and prolongations corresponding to some $Y_\tau^0\in \mathcal{M}_\tau$, and define
$$
M_{\tau_i} = \pi_{\tau,i}^\dagger \circ M_{\tau}  \circ \pi_{\tau,i}.
$$
Then, $M_{\tau_i}: \mathcal{M}_{\tau_i} \to \mathcal{V}_{\tau_i}$ also maps into the tangent space:
$$
M_{\tau_i}(Y_{\tau_i}) \in T_{Y_{\tau_i}} \mathcal{M}_{\tau_i}  \quad\text{for all } \ Y_{\tau_i}\in  \mathcal{M}_{\tau_i}.
$$
\end{lemma}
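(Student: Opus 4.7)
The plan is to verify the conclusion via the curve characterization of tangent vectors, reducing everything to a single continuity argument. Given $Y_{\tau_i}\in\mathcal{M}_{\tau_i}$, set $Y_\tau^{\star}:=\pi_{\tau,i}(Y_{\tau_i})$; by Lemma~\ref{lem:prol-rank} this lies in $\mathcal{M}_\tau$, so the hypothesis on $M_\tau$ delivers $v:=M_\tau(Y_\tau^{\star})\in T_{Y_\tau^{\star}}\mathcal{M}_\tau$. Since $\mathcal{M}_\tau$ is a smooth embedded manifold (Lemma~\ref{lem:mf}), we can pick a smooth curve $\gamma:(-\varepsilon,\varepsilon)\to\mathcal{M}_\tau$ with $\gamma(0)=Y_\tau^{\star}$ and $\dot\gamma(0)=v$. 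Because $\pi_{\tau,i}^{\dagger}$ is linear, the composite $\beta(s):=\pi_{\tau,i}^{\dagger}(\gamma(s))$ is a smooth curve in $\mathcal{V}_{\tau_i}$, and the left-inverse identity in Lemma~\ref{lem:prol-rest} gives $\beta(0)=\pi_{\tau,i}^{\dagger}(\pi_{\tau,i}(Y_{\tau_i}))=Y_{\tau_i}\in\mathcal{M}_{\tau_i}$.

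The crucial claim is that $\beta(s)$ stays in $\mathcal{M}_{\tau_i}$ for all $s$ small enough. For this I invoke the explicit computation carried out in Subsection~\ref{subsec:phi-tau-dagger}: for any tree tensor network $Z_\tau\in\mathcal{M}_\tau$ written in factorized form, $\pi_{\tau,i}^{\dagger}(Z_\tau)$ is again a tree tensor network on $\tau_i$, obtained from the $\tau_i$-subtree of $Z_\tau$ by multiplying its root connection tensor in mode $0$ by a small matrix $\mathbf{R}_{\tau_i}$. Applied along the curve, this means $\beta(s)$ is a tree tensor network on $\tau_i$ whose tree ranks are bounded above by $(r_\sigma)_{\sigma\le\tau_i}$ for every $s$. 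Since $\beta(0)=Y_{\tau_i}$ has \emph{full} tree rank $(r_\sigma)_{\sigma\le\tau_i}$, and since the full-rank condition is open in the (closed) variety of tree tensor networks with bounded ranks, continuity of $\beta$ shows that $\beta(s)\in\mathcal{M}_{\tau_i}$ for $s$ in a smaller neighborhood of $0$.

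Differentiating $\beta$ at $s=0$ and using linearity of the restriction yields
\[
\dot\beta(0) \;=\; \pi_{\tau,i}^{\dagger}(\dot\gamma(0)) \;=\; \pi_{\tau,i}^{\dagger}\bigl(M_\tau(\pi_{\tau,i}(Y_{\tau_i}))\bigr) \;=\; M_{\tau_i}(Y_{\tau_i}).
\]
Since $\beta$ is a smooth curve in $\mathcal{M}_{\tau_i}$ passing through $Y_{\tau_i}$ at $s=0$, its derivative there belongs to $T_{Y_{\tau_i}}\mathcal{M}_{\tau_i}$, which is exactly the desired conclusion.

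The step I expect to be the main obstacle is the continuity/openness argument for the full-rank condition on $\beta(s)$. It requires combining the structural fact that $\pi_{\tau,i}^{\dagger}$ preserves the tree tensor network format (and cannot inflate the tree ranks beyond $(r_\sigma)_{\sigma\le\tau_i}$) with the elementary observation that full-rank configurations form an open subset of the tree tensor network variety. Everything else amounts to linearity of $\pi_{\tau,i}^{\dagger}$ and the tangent-vector/curve correspondence on the smooth manifolds $\mathcal{M}_\tau$ and $\mathcal{M}_{\tau_i}$.
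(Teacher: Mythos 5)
Your proposal is correct and follows essentially the same route as the paper's proof: prolong $Y_{\tau_i}$ to $\mathcal{M}_\tau$ via Lemma~\ref{lem:prol-rank}, push a tangent curve through $\pi_{\tau,i}^\dagger$, use the left-inverse identity of Lemma~\ref{lem:prol-rest} at $s=0$, and differentiate. Your openness/lower-semicontinuity argument for why the restricted curve stays in $\mathcal{M}_{\tau_i}$ is a valid (and more detailed) elaboration of what the paper dispatches with the phrase ``by local continuity of the rank.''
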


\begin{proof} Let $Y_{\tau_i}\in  \mathcal{M}_{\tau_i}$ and define $Y_\tau = \pi_{\tau,i}(Y_{\tau_i})$, which by Lemma~\ref{lem:prol-rank} is in $\mathcal{M}_\tau$. By assumption, $M_\tau(Y_\tau) \in T_{Y_\tau} \mathcal{M}_\tau $, and hence there exists a path 
$X_\tau(\theta)\in\mathcal{M}_\tau$, for $\theta$ near $0$, with 
$X_\tau(0)=Y_\tau$ and $\frac d{d\theta}\vert _{\theta=0} X_\tau = M_\tau(Y_\tau) $. Then, the restricted path
$X_{\tau_i}(\theta)=\pi_{\tau,i}^\dagger X_\tau(\theta)$ has
$$
X_{\tau_i}(0) = \pi_{\tau,i}^\dagger (Y_\tau) =  \pi_{\tau,i}^\dagger (\pi_{\tau,i}(Y_{\tau_i})) = Y_{\tau_i},
$$
because $\pi_{\tau,i}^\dagger$ is a left inverse of $\pi_{\tau,i}$ by Lemma~\ref{lem:prol-rest}. By local continuity of the rank, we then have
$X_{\tau_i}(\theta) \in \mathcal{M}_{\tau_i}$. Moreover,
$$
\frac d{d\theta}\bigg\vert _{\theta=0} X_{\tau_i} = \pi_{\tau,i}^\dagger\, \frac d{d\theta}\bigg\vert _{\theta=0} X_\tau = \pi_{\tau,i}^\dagger M_\tau(Y_\tau) =
M_{\tau_i}(Y_{\tau_i}).
$$
Hence, $M_{\tau_i}(Y_{\tau_i}) \in T_{Y_{\tau_i}} \mathcal{M}_{\tau_i}$.
\end{proof}

\begin{proof} (of Lemma~\ref{lem:F-tau})
The Lipschitz bound and the norm bound of $F_\tau$ follow directly from the corresponding bounds of $F$, using that restriction and prolongation are operators of norm 1 by Lemma~\ref{lem:prol-rest}. It then remains to show \eqref{eps-tau}.
For $Y\in\M$ we write
$$
F(t,Y) = P(Y)F(t,Y) + (I-P(Y))F(t,Y) \equiv M(t,Y) + R(t,Y)
$$
with $M(t,Y) \in T_Y\M$ and $\| R(t,Y) \| \le \eps$ by \eqref{eps}.
We let $M_{\bar\tau}=M$ and $R_{\bar\tau}=R$ and define recursively, for $\tau=(\tau_1,\dots,\tau_m)\le\bar\tau$,
\begin{align*}
M_{\tau_i} &= \pi_{\tau,i}^\dagger \circ M_{\tau} \circ \pi_{\tau,i},
\\
R_{\tau_i} &= \pi_{\tau,i}^\dagger \circ R_{\tau} \circ \pi_{\tau,i}.
\end{align*}
By Lemma~\ref{lem:M-tau}, for every subtree $\tau\le\bar\tau$, $M_\tau$ maps into the tangent space:
$$
M_{\tau}(Y_{\tau}) \in T_{Y_{\tau}} \mathcal{M}_{\tau}  \quad\text{for all } \ Y_{\tau}\in  \mathcal{M}_{\tau}.
$$
Hence,
$$
(I-P_\tau(Y_\tau))F_\tau(t,Y_\tau) = (I-P_\tau(Y_\tau))R_\tau(t,Y_\tau),
$$
and once again, since restriction and prolongation are operators of norm 1, it follows from \eqref{eps} that  
$$
\| (I-P_\tau(Y_\tau))F_\tau(t,Y_\tau) \| \le \| R_\tau(t,Y_\tau) \| \le \eps.
$$
This proves Lemma~\ref{lem:F-tau}.
\end{proof}

\begin{proof} (of Theorem~\ref{thm:error})
It suffices to assume that $Y^0=A(t_0) \in \M$, since the difference of exact solutions of the differential equation \eqref{eq:fullEq} corresponding to initial values that differ at most by $\delta$, is bounded by $c_0\delta$ for $t_0\le t \le t^*$ under the imposed Lipschitz condition on $F$.
Moreover, it then suffices to show that the local error after one time step is of magnitude $O(h(\eps+h))$. The result for the global error is then obtained with the familiar Lady Windermere's fan argument, as in \cite{KiLW16} and \cite{LuVW18}.

	As in the proof of Theorem~\ref{thm:exact}, we proceed by induction on the height of the tree.
	
	For trees of height 1, the recursive TTN integrator coincides with the Tucker integrator of \cite{LuVW18}, for which the error estimate has been proved in \cite{LuVW18}. 
	
	For trees $\tau=(\tau_1,\dots,\tau_m)$ of higher height, we observe that in the recursive TTN integrator, the differential equations for $Y_{\tau_i}$ are solved approximately by intermediate tree tensor networks with lower height, for which the $O(h(\eps+h))$ error bound holds by the induction hypothesis. If instead the differential equations for $Y_{\tau_i}$ were solved exactly, then the integrator would again reduce to the Tucker integrator and error in this idealized $Y_\tau$ after one step would be $O(h(\eps+h))$.
By studying the influence of the inexact solution of the differential equation for $Y_{\tau_i}$  on the error (as in \cite[Subsection 2.6.3]{KiLW16}), we find that the error of the actual $Y_\tau$ is still of magnitude $O(h(\eps+h))$. We omit the details of this perturbation argument, since it is cumbersome to write down explicitly and requires no ideas beyond using the triangle inequality.
	
	This completes the induction argument. Finally, we thus obtain the error bound for $\bar \tau$, which yields the result of Theorem~\ref{thm:error}.
\end{proof}

\section{Numerical experiments}
\label{sec:num}
\bcl
The recursive TTN integrator has already been applied to problems from plasma physics  \cite{LuE18} and quantum physics \cite{KlLR20}, where numerical results are reported.
In the following we therefore give just two illustrative numerical examples. We choose the tree $\bar\tau$ of Figure 2.1. 
The dimensions $n_\ell$ and the ranks $r_\tau$ are taken the same for all the nodes and are fixed to $n_\ell=n=16$ for all leaves $\ell$ and $r_\tau=r=5$ for all subtrees $\tau < \bar\tau$. We have chosen such small dimensions $n$ to be able to easily compute the reference solution, which is a full tensor with $n^6$ entries. In contrast, the storage for the tree tensor network for this tree is $6nr + r^4 + 2r^3$ entries.
\ecl

\bcl 
The computations were done using Matlab R2017a software with Tensor Toolbox package v2.6 \cite{TTB_Software}. 
The implementation of the TTN is done using an Object Oriented Paradigm; we define in Matlab a class \texttt{Node} with two properties: \texttt{Value} and \texttt{Children}. The \texttt{Value} can be either a connection tensor or an orthonormal matrix. The \texttt{Children} is an ordered list of objects of type \texttt{Node}. If the \texttt{Children} list is empty, we are on one of the leaves of the tree; the \texttt{Value} of the \texttt{Node} is by construction an orthonormal matrix. The TTN is defined as an object of type \texttt{Node}.  

The recursive TTN integrator is then implemented by recursively applying the Extended Tucker Integrator to an object of type \texttt{Node}. The recursion process in the recursive TTN algorithm is controlled by counting the elements of the \texttt{Children} list associated with the \texttt{Node}: if empty, we are on one of the leaves of the tree.

\subsection{Tree tensor network addition and retraction}
Let $\bar \tau$ be the given tree and let $ \mathcal{M}_{\bar \tau}$ be the manifold of tree tensor networks of given dimensions $(n_\ell)$ and tree rank $(r_\tau)_{\tau\le\bar\tau}$; see Section~\ref{subsec:ttn}.
 We consider the addition of  two given tensors $A \in \mathcal{M}_{\bar \tau}$ and $B \in \mathcal{T}_{A}\mathcal{M}_{\bar \tau}$ (a tangent tensor),
$$ C = A + B.$$
Then, $C $ is a tree tensor network on the same tree but of larger rank. We want to compute a tree tensor network retraction to the manifold $ \mathcal{M}_{\bar \tau}$, i.e., to the original tree rank $(r_\tau)_{\tau\le\bar\tau}$. 
\ecl
Such a retraction is typically required in optimization problems on low-rank manifolds and needs to be computed in each iterative step. The approach considered here consists of reformulating the addition
problem as the solution of the following differential equation at time $t = 1$:
$$ \dot{C}(t) = B, \quad C(0)=A.$$
We compare the approximation $Y^1 \in \mathcal{M}_{\bar \tau}$, computed with one time step of the recursive TTN integrator with step size $h=1$, with a different retraction, \bcl denoted by $X$, \ecl obtained by computing the full addition $C$ and recursively retracting to the manifold $\mathcal{M}_\tau$ for each $\tau \leq \bar \tau$. For the latter, we use the built-in function $\texttt{tucker\_als}$ of the Tensor Toolbox Package \cite{TTB_Software}; we recursively apply the function to the full tensor $C$ and its retensorized basis matrices.

 \bcl These comparisons are illustrated in Figure~\ref{fig:addition} where the norm of $B$ is varied. We observe both retractions $Y^1$ and $X$ have very similar error, and their difference is considerably smaller than their errors. Decreasing the norm of the tensor $B$ reduces the approximation error as expected, proportional to $\|B\|^2$.
This behaviour of the TTN integrator used for retraction is the same as observed for the Tucker integrator in \cite{LuVW18} for the analogous problem of the addition of a Tucker tensor of given multilinear rank and a tangent tensor.
\ecl


\begin{figure}[h!] \label{fig:addition}
	\begin{center}
		\includegraphics[width=\textwidth]{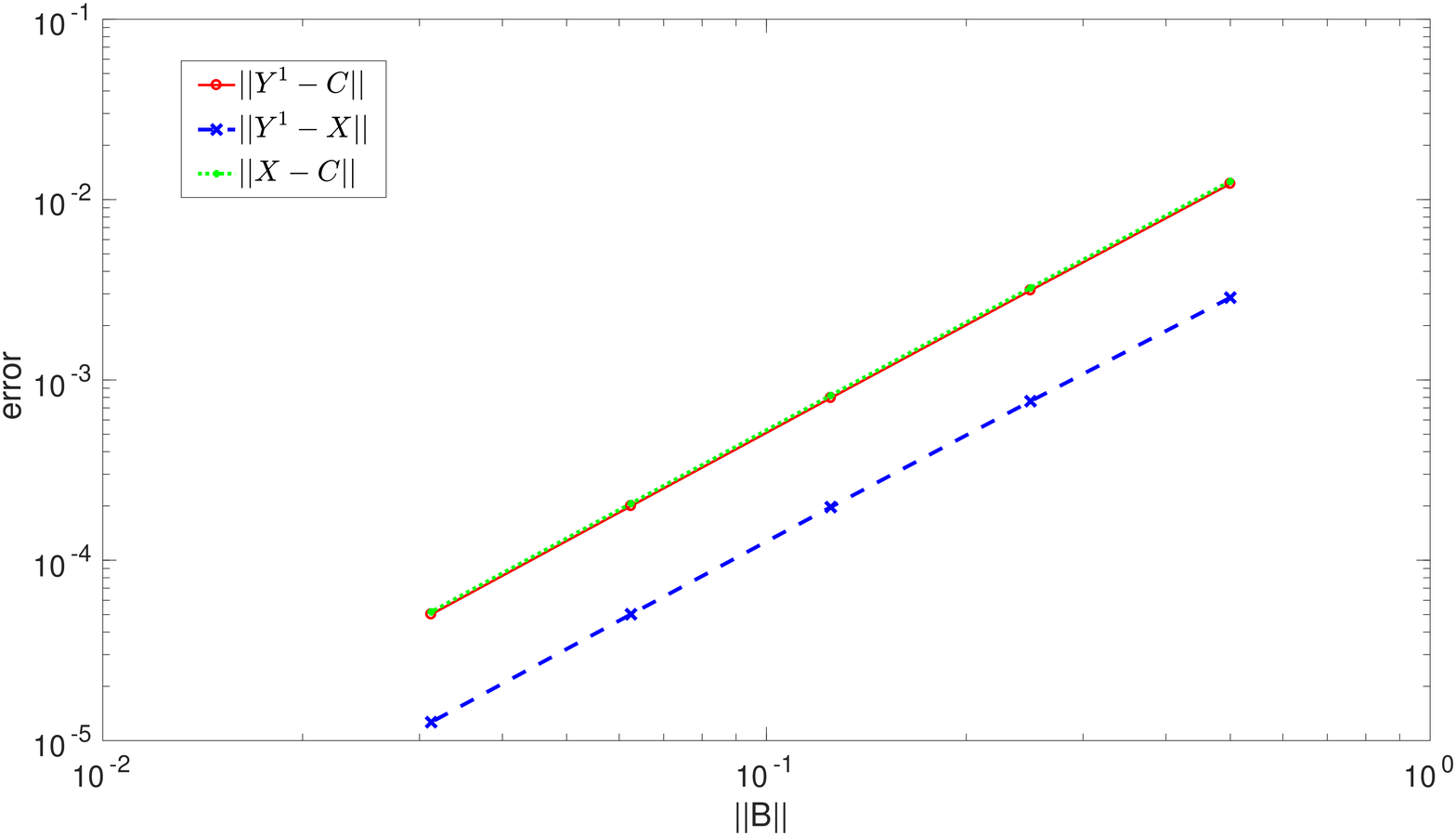}
		\caption{Error of retracted tree tensor network sum.}
	\end{center}
\end{figure}

The advantage of the retraction via the TTN integrator is that the result is completely built within the tree tensor network manifold. No further retraction is needed, which is favorable for storage and computational complexity.

\subsection{Verification of the exactness property}
We consider a tree tensor network $A^0 \in \mathcal{M}_{\bar \tau}$. For each subtree $\tau \leq \bar \tau$, let $\textbf{W}_{\tau} \in \mathbb{R}^{r_\tau \times r_\tau}$ be a skew-symmetric matrix which we choose of \bcl Frobenius \ecl norm 1. We consider a time-dependent tree tensor network $A(t) \in \mathcal{M}_{\bar \tau}$ such that $A(t_0)=A^0$ with basis matrices propagated in time through
$$ \U_\ell(t) = e^{t\textbf{W}_\ell} \U_\ell^0,\qquad \ell \in {L}(\bar \tau)  $$
and the connection tensors changed according to
$$ C_\tau(t) = C_\tau^0 \times_0 e^{t\textbf{W}_\tau}, \qquad \tau \leq \bar \tau, \ \tau \notin {L}(\bar \tau). $$
The time-dependent tree tensor network does not change rank and as predicted by Theorem~\ref{thm:exact}, it is reproduced exactly by the recursive TTN integrator, up to round-off errors. The absolute errors $\|Y_n - A(t_n) \|$ calculated at time $t_n=nh$ with step sizes $h= 0.1, \, 0.01, \, 0.001$ until time $t^*=1$ are shown in Figure~\ref{fig:Exactness}. 

\begin{figure}[ht]
	\includegraphics[width=\textwidth]{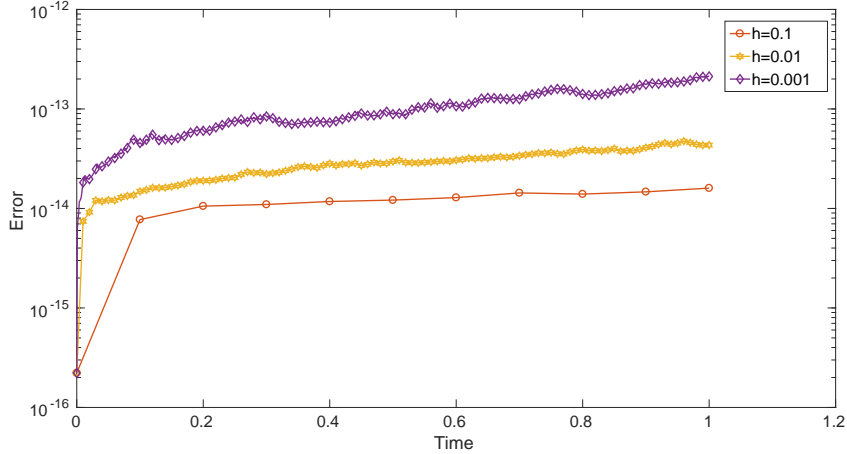}
	\caption{Error vs.~time in a case of exactness up to round-off errors.}
	\label{fig:Exactness}
\end{figure}

%

\section*{Acknowledgements}
\bcl
We thank two anonymous referees for their helpful comments on a previous version.

This work was funded by the Deutsche Forschungsgemeinschaft (DFG, German Research Foundation) --- Project-ID 258734477 --- SFB 1173 and DFG GRK 1838.
\ecl

\bibliography{references}{}

\end{document}